    \newcommand\cyr{%
    \renewcommand\rmdefault{wncyr}%
    \renewcommand\sfdefault{wncyss}%
    \renewcommand\encodingdefault{OT2}%
    \normalfont
    \selectfont}
    \DeclareTextFontCommand{\textcyr}{\cyr}
\theoremstyle{plain}
\newtheorem{theorem}[subsection]{{\bf Theorem}}
\newtheorem*{theorem*}{{\bf Theorem}}
\newtheorem{corollary}[subsection]{{\bf Corollary}}
\newtheorem*{corollary*}{{\bf Corollary}}
\newtheorem{lemma}[subsection]{{\bf Lemma}}
\newtheorem{question}[subsection]{{\bf Question}}
\theoremstyle{definition}
\newtheorem*{definition}{{\bf Definition}}
\theoremstyle{remark}
\numberwithin{equation}{section}
\DeclareMathOperator{\Res}{Res}
\DeclareMathOperator{\Inf}{Inf}
\DeclareMathOperator{\Tra}{Tra}
\DeclareMathOperator{\inff}{inff}
\DeclareMathOperator{\tra}{tra}
\DeclareMathOperator{\Hom}{Hom}
\newcommand{\gen}[1]{\big{\langle} #1 \big{\rangle}}
\newcommand{\largewedge}{\mbox{\Large $\wedge$}}
\begin{document}
\baselineskip=14pt
\title{Bogomolov multiplier of Lie Algebras}
\author[P. K. Rai]{Pradeep K. Rai}
\address[Pradeep K. Rai]{Mahindra University, 
Hyderabad, Telangana,\\
India}
\email{raipradeepiitb@gmail.com}
\subjclass[2010]{17B56, 14E08}
\keywords{Bogomolov multiplier, Lie algebras, Second cohomology group}
\begin{abstract}
In the work of Rostami et al., the Bogomolov multiplier of a Lie algebra $L$ over a field $\Omega$ is defined as a particular factor of a subalgebra of the exterior product $L \wedge L$. If $L$ is finite dimensional, we identify this object as a certain subgroup of the second cohomology group $H^2(L, \Omega)$ by deriving a Hopf-Type formula. As an application, we affirmatively answer two questions posed by Kunyavski\u{\i} regarding the invariance of the Bogomolov multiplier under isoclinism of Lie algebras and the existence of a family of Lie algebras with  Bogomolov multipliers of unbounded dimension.

\end{abstract}
\maketitle

\section{Introduction}
The Bogomolov multiplier of a finite group $G$ is a cohomological invariant of $G$ that has been studied in connection with Noether's problem, which asks whether the fixed subfield $k(G)$ of the function field $k(x_g: g \in G)$ is purely transcendental over an algebraically closed field $k$ of characteristic zero. The Bogomolov multiplier has played a key role in the discovery of counter examples to Noether's problem over the complex numbers $\mathbb{C}$. Saltman \cite{Saltman} was the first to provide such counter examples, showing that if the unramified cohomology group $H_{nr}^2(k(G), \mathbb{C})$ is nonzero, then $G$ has a negative solution to Noether's problem over $\mathbb{C}$. Bogomolov later showed that $H_{nr}^2(k(G), \mathbb{C})$ is naturally isomorphic to the subgroup $B_0(G)$ of the second cohomology group $H^2(G,\mathbb{C})$ consisting of classes that vanish when restricted to the abelian subgroups of $G$ \cite{Bogomolov}. This has led to the discovery of numerous other counter examples to Noether's problem. Kunyavski\u{\i} later gave the name ``Bogomolov multiplier" to $B_0(G)$ \cite{Kunyavskii}, and Moravec provided a homological description of $B_0(G)$ as a quotient of $H^2(G,\mathbb{C})$ \cite{Moravec}. Following Moravec's construction, Rostami et. al \cite{Niroomand} extended the notion of Bogomolov multiplier to Lie algebras. For the convenience of the reader we define it here.

Let $L$ be a Lie algebra over $\Omega$. The exterior square of $L$ 
is defined to be the Lie algebra $L \largewedge L$ generated by the symbols $m \wedge n$, where $m, n \in L$, subject to the following relations:
\begin{itemize}
    \item[(i)] $\alpha(m \wedge n) = \alpha m \wedge n = m \wedge \alpha n$,
\item[(ii)] $(m + m') \wedge n = m \wedge n + m' \wedge n$,
\item[(iii)] $m \wedge (n + n') = m \wedge n + m \wedge n'$,
\item[(iv)] $[m, m'] \wedge n = m \wedge [m', n]- m' \wedge [m, n]$,
\item[(v)] $m \wedge [n, n'] = [n', m] \wedge n -[n, m] \wedge n'$,
\item[(vi)] $[(m \wedge n),(m' \wedge n')] = -[n, m] \wedge [m', n']$,
\item[(vii)] $m \wedge n = 0$ whenever $m = n$,
\end{itemize}
for all $\alpha \in \Omega, m, m', n, n' \in L$.

It is easy to see that $\mathcal{K}: L \times L \mapsto [L, L]$ given by $(m, n) \mapsto [m, n]$ induces a homomorphism $\bar{\mathcal{K}}: L \largewedge L \mapsto [L, L]$, such that
$\bar{\mathcal{K}}(m \wedge n) = [m, n]$, for all $m, n \in L$. It is known that the kernel of $\bar{\mathcal{K}}$ is isomorphic to the Schur Multiplier $H^2(L,\Omega)$ (defined below). We denote it by $\mathcal{M}(L)$. Define $\mathcal{M}_0(L)$ to be the group $\gen{m \wedge n \ \mid \  m, n \in L, [m, n] = 0}$. 
The factor group $\frac{\mathcal{M}(L)}{\mathcal{M}_0(L)}$ is defined to be the Bogomolov multiplier $B(L)$ of the Lie algebra $L$. 

In this article, we define a cohomological object $B_0(L)$ for a finite dimensional Lie algebra $L$ over a field $\Omega$, and show that it is isomorphic to the Bogomolov multiplier $B(L)$. Before that, we recall the definition of the Schur multiplier of a finite dimensional Lie algebra. Let $L$ be a finite dimensional Lie algebra and $A$ be a trivial $L$-module. A map $f: L \times L \mapsto A$ said to be a 2-cocycle if it is bilinear, alternating and satisfies 
\[f([x_1, x_2], x_3) + f([x_2, x_3], x_1) + f([x_3, x_1], x_2) = 0.\]
And $f$  is  said to be a 2-coboundry if there exists a linear $\sigma : L \mapsto A$ such that 
\[f(x_1, x_2) = -\sigma([x_1, x_2]).\]
The sets of 2-cocycles and 2-coboundries are denoted by $Z^2(L,A)$ and  $B^2(L,A)$, respectively and form abelian groups with respect to usual addition. The group $Z^2(L,A)/B^2(L,A)$ is said to be the second cohomology group with coefficients in $A$, and is denoted by $H^2(L,A)$. 

Schur multiplier of the Lie algebra $L$ is defined as the abelian Lie algebra $H^2(L,\Omega)$, considering $\Omega$ as a central $L$-module. We are now ready to define $B_0(L).$

\begin{definition}
For a finite dimensional Lie algebra $L$ over $\Omega$, we define $B_0(L)$ as follows:
\[B_0(L) = \{\overline{f} \in H^2(L,\Omega)\ \ | \ \ f(x_1, x_2) = 0 \ \ \text{whenever} \ \ [x_1, x_2] = 0\}.\]
\end{definition}

Batten \cite[Theorem 3.6]{Batten} established the following Hopf Formula for the Schur multiplier of the Lie algebra $L$:  
\[H^2(L,\Omega) \cong \frac{F' \cap R}{[F,R]}, \]
where $1 \mapsto R \mapsto F \mapsto L \mapsto 1$ is a free a presentation of $L$ and $F'$ is the  derived subalgebra of $F$. 

Let $K(L)$ denote the set $\{[x,y] \ \ | \ \ x, y \in L\}$.
In the following theorem we derive a Hopf-type formula for $B_0(L)$. 

\begin{theorem}\label{cor}
Let $L$ be a finite dimensional Lie algebra with a free presentation $L \cong F/R$. Then $B_0(L) \cong \frac{F' \cap R}{\gen{K(F) \cap R}}$.
\end{theorem}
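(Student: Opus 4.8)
The plan is to make Batten's Hopf formula completely explicit as a nondegenerate pairing between $H^2(L,\Omega)$ and $(F'\cap R)/[F,R]$, and then to read off $B_0(L)$ as an annihilator inside that pairing. Fix the free presentation $\pi\colon F\to L$ with $\ker\pi=R$; since $F$ is free we have $H^2(F,\Omega)=0$. Given a $2$-cocycle $f\in Z^2(L,\Omega)$, the pullback $(a,b)\mapsto f(\pi a,\pi b)$ is a $2$-cocycle on $F$, hence a coboundary: there is a linear $\sigma\colon F\to\Omega$ with $f(\pi a,\pi b)=-\sigma([a,b])$ for all $a,b\in F$. Because $f$ is bilinear and $\pi(R)=0$, the functional $\sigma$ kills $[F,R]$, so its restriction to $F'\cap R$ descends to $\widehat{\sigma}\colon (F'\cap R)/[F,R]\to\Omega$. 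I would first check that $f\mapsto\widehat{\sigma}$ is well defined on cohomology classes: any two admissible $\sigma$ differ by a map vanishing on $F'\supseteq F'\cap R$, and if $f(x,y)=-\tau([x,y])$ is a coboundary then one may take $\sigma=\tau\circ\pi$, which kills $R$ and hence gives $\widehat{\sigma}=0$.

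Next I would verify that $f\mapsto\widehat{\sigma}$ is an isomorphism $H^2(L,\Omega)\xrightarrow{\sim}\big((F'\cap R)/[F,R]\big)^{\ast}$. Injectivity: if $\widehat{\sigma}=0$ then $\sigma$ vanishes on $F'\cap R$, so $c\mapsto\sigma(\text{any lift of }c)$ is a well-defined functional on $[L,L]$ (two lifts of the same element differ by something in $F'\cap R$); extending it to $\tau\colon L\to\Omega$ exhibits $f(x,y)=-\tau([x,y])$ as a coboundary. Surjectivity is dual: a functional on $(F'\cap R)/[F,R]$ lifts to $\sigma\colon F\to\Omega$ killing $[F,R]$, and $f(\pi a,\pi b):=-\sigma([a,b])$ is a well-defined cocycle, independence of the chosen lifts using again that $[F,R]\subseteq F'\cap R$ is annihilated. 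This recovers Batten's formula in the form $H^2(L,\Omega)\cong\big((F'\cap R)/[F,R]\big)^{\ast}$; alternatively one may simply invoke Batten for the dimension and use the pairing only to locate $B_0(L)$.

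With the pairing in hand the description of $B_0(L)$ is immediate: a commuting pair $[x,y]=0$ in $L$ lifts to $a,b\in F$ with $[a,b]\in R$, that is $[a,b]\in K(F)\cap R$, and there $f(x,y)=-\sigma([a,b])$. Hence $f\in B_0(L)$ exactly when $\sigma$ vanishes on every element of $K(F)\cap R$, i.e.\ when $\widehat{\sigma}$ annihilates the image $V$ of $\gen{K(F)\cap R}$ in $(F'\cap R)/[F,R]$. A small but necessary observation is that $V$ does not depend on how $\gen{\,\cdot\,}$ is read: any bracket $[c,z]$ with $c\in R$ lands in $[F,R]$, so passing to the quotient by $[F,R]$ collapses the ideal, the subalgebra, and the linear span of $K(F)\cap R$ to the single subspace spanned by $\{[a,b]+[F,R]:[a,b]\in R\}$, which lies in $(F'\cap R)/[F,R]$.

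Thus, under the pairing, $B_0(L)=\mathrm{Ann}(V)$. Since $L$ is finite dimensional, so is $(F'\cap R)/[F,R]$, and therefore $\dim B_0(L)=\dim\mathrm{Ann}(V)=\dim\big((F'\cap R)/[F,R]\big)-\dim V$, which by the third isomorphism theorem equals $\dim\big((F'\cap R)/\gen{K(F)\cap R}\big)$. Two finite dimensional $\Omega$-vector spaces of equal dimension are isomorphic, giving the claimed $B_0(L)\cong (F'\cap R)/\gen{K(F)\cap R}$ (consistently with Batten's formula). The genuinely nontrivial part, and the step I expect to be the main obstacle, is the explicit construction of the pairing together with the proofs that it is well defined and nondegenerate; once that is in place, rewriting the vanishing condition $[x,y]=0\Rightarrow f(x,y)=0$ as membership in $\mathrm{Ann}(V)$ is routine, and the remark that brackets into $R$ fall into $[F,R]$ removes any ambiguity in the notation $\gen{K(F)\cap R}$.
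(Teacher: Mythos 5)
Your proof is correct, and it takes a genuinely different route from the paper's. The paper never touches $H^2(F,\Omega)$ directly: it passes to the central extension $\bar{R}=R/[F,R]\le\bar{F}=F/[F,R]$ with $\bar{F}/\bar{R}\cong L$ and runs Batten's five-term exact sequence (Theorem~\ref{thm6}) through two intermediate results --- Lemma~\ref{lem0}, which says $\Tra(\lambda)$ lands in $B_0(L)$ exactly when $\lambda$ kills $\gen{K(\bar{F})\cap\bar{R}}$, and Theorem~\ref{thm9}, which identifies the image of the restricted transgression $\tra$ with $(\bar{F}'\cap\bar{R})/\gen{K(\bar{F})\cap\bar{R}}$ --- then quotes Batten's surjectivity of $\Tra$ for this presentation and unwinds the bars. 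You instead bypass the exact-sequence machinery: from $H^2(F,\Omega)=0$ you build an explicit duality $H^2(L,\Omega)\cong\bigl((F'\cap R)/[F,R]\bigr)^{\ast}$ (pull a cocycle back to $F$, write it as $-\sigma([a,b])$, pass $\sigma$ to the quotient) and read off $B_0(L)$ as the annihilator of $V=\gen{K(F)\cap R}/[F,R]$, finishing by a dimension count. In substance your $\sigma$-construction is the transgression computed by hand, so the two arguments are cousins; yours is self-contained and reproves the Hopf formula along the way, while the paper's organization buys reusable intermediate statements (Lemma~\ref{lem0} also drives Theorems~\ref{thm7} and~\ref{thm8}, and Theorem~\ref{thm9} rests on Theorem~\ref{thm7}). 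Two small points to tighten: the assertion $H^2(F,\Omega)=0$ for free $F$ should be cited rather than asserted (it is precisely the fact underlying the Hopf formula in \cite{Batten}); and the final step can be made canonical, avoiding the bare dimension count, via $\mathrm{Ann}(V)\cong\bigl(W/V\bigr)^{\ast}\cong W/V$ with $W=(F'\cap R)/[F,R]$ --- here your observation that $[F,R]\subseteq\gen{K(F)\cap R}$ (every $[a,c]$ with $c\in R$ lies in $K(F)\cap R$) is exactly what makes $V$ a subspace of $W$ and legitimizes the third-isomorphism identification $W/V\cong(F'\cap R)/\gen{K(F)\cap R}$.
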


The following corollary follows from \cite[Proposition 4.1]{Niroomand} and the Theorem \ref{cor}.
\begin{corollary}\label{cor1}
Let $L$ be a finite dimensional Lie algebra. Then $B(L) \cong B_0(L).$
\end{corollary}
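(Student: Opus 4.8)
The plan is to realize Batten's isomorphism $H^2(L,\Omega)\cong (F'\cap R)/[F,R]$ through an explicit pairing coming from central extensions, and then to identify $B_0(L)$ with the annihilator of the subspace $\gen{K(F)\cap R}/[F,R]$. Fix the free presentation $0\to R\to F\xrightarrow{\pi} L\to 0$ with $F$ free. First I would record the two inclusions that make the target meaningful: for $x\in F,\ r\in R$ the element $[x,r]$ lies in $K(F)$ and, since $R$ is an ideal, in $R$, so $[F,R]\subseteq\gen{K(F)\cap R}$; and every bracket lies in $F'$, so $\gen{K(F)\cap R}\subseteq F'\cap R$. Thus $[F,R]\subseteq\gen{K(F)\cap R}\subseteq F'\cap R$ and the quotient $(F'\cap R)/\gen{K(F)\cap R}$ is defined.

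Next I would attach to each cocycle $f\in Z^2(L,\Omega)$ the central extension $E_f=L\oplus\Omega$ with bracket $[(x,a),(y,b)]=([x,y],f(x,y))$, lift $\pi$ to a Lie homomorphism $\tilde\pi\colon F\to E_f$ (possible since $F$ is free), and set $\phi_f(r)$ to be the $\Omega$-component of $\tilde\pi(r)$ for $r\in R$ (well defined because $\pi(R)=0$). A short check shows that two lifts agree on $F'$, that $\phi_f$ kills $[F,R]$, and that $\phi_f$ vanishes identically when $f$ is a coboundary (the extension then splits); hence $f\mapsto\phi_f$ descends to a linear map
\[\Phi\colon H^2(L,\Omega)\longrightarrow \Hom\!\big((F'\cap R)/[F,R],\,\Omega\big).\]
That $\Phi$ is an isomorphism is exactly the explicit form of Batten's Hopf formula, so I would invoke \cite{Batten} here rather than reprove it; equivalently, the induced pairing between $H^2(L,\Omega)$ and $(F'\cap R)/[F,R]$ is perfect.

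The heart of the argument is the computation of $\phi_f$ on a bracket $[x,y]\in K(F)\cap R$. Lifting $x,y$ and using $[\pi x,\pi y]=\pi[x,y]=0$ gives $\tilde\pi[x,y]=[\tilde\pi x,\tilde\pi y]=(0,f(\pi x,\pi y))$, so $\phi_f([x,y])=f(\pi x,\pi y)$ with $\pi x,\pi y$ a commuting pair in $L$. Running this both ways, $\overline{f}\in B_0(L)$, i.e.\ $f$ vanishes on all commuting pairs, holds if and only if $\phi_f$ vanishes on every generator of $\gen{K(F)\cap R}$: for the converse one lifts an arbitrary commuting pair $u,v\in L$ to $x,y\in F$, whence $[x,y]\in K(F)\cap R$ and $f(u,v)=\phi_f([x,y])=0$. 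Therefore $\Phi$ carries $B_0(L)$ isomorphically onto the annihilator $\big(\gen{K(F)\cap R}/[F,R]\big)^{\perp}$ inside $\Hom((F'\cap R)/[F,R],\Omega)$.

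Finally I would conclude by linear algebra over $\Omega$: writing $V=(F'\cap R)/[F,R]$ and $W=\gen{K(F)\cap R}/[F,R]$, the annihilator $W^{\perp}$ in $V^{*}$ is canonically $(V/W)^{*}$, so $B_0(L)\cong (V/W)^{*}=\big((F'\cap R)/\gen{K(F)\cap R}\big)^{*}$, and since all spaces are finite dimensional this dual is (non-canonically) isomorphic to $(F'\cap R)/\gen{K(F)\cap R}$ itself, giving the claim. I expect the main obstacle to be the clean bookkeeping around $\phi_f$ — proving independence of the chosen lift and vanishing on coboundaries — together with the step that upgrades Batten's abstract isomorphism to the statement that $\Phi$ (equivalently the central-extension pairing) is nondegenerate, which is what licenses passing to annihilators and dualizing.
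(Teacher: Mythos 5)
Your argument, as written, proves a different statement from the one asked: everything you construct culminates in the isomorphism $B_0(L)\cong (F'\cap R)/\gen{K(F)\cap R}$, which is the paper's Theorem \ref{cor} (the Hopf-type formula for the cohomological object), not Corollary \ref{cor1}. The corollary asserts $B(L)\cong B_0(L)$, where $B(L)=\mathcal{M}(L)/\mathcal{M}_0(L)$ is defined through the exterior square $L\largewedge L$, and your proposal never mentions $\mathcal{M}(L)$, $\mathcal{M}_0(L)$, or any map relating cohomology classes to wedges $m\wedge n$. The missing ingredient is exactly the second half of the paper's (one-line) proof: by \cite[Proposition 4.1]{Niroomand}, the exterior-square object also satisfies a Hopf-type formula, $B(L)\cong (F'\cap R)/\gen{K(F)\cap R}$, and the corollary follows by composing the two isomorphisms. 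Without citing or reproving that result, your final phrase ``giving the claim'' silently identifies $B(L)$ with the Hopf quotient, which is a theorem of Rostami et al., not a definition; that identification is precisely the content that makes this a corollary rather than a restatement of Theorem \ref{cor}.

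What you did prove is essentially sound, and it is a genuinely different route to Theorem \ref{cor} than the paper's: the paper goes through Batten's five-term exact sequence, the characterization of when transgression lands in $B_0$ (Lemma \ref{lem0}), and Theorem \ref{thm9}, whereas you set up the central-extension pairing $\Phi\colon H^2(L,\Omega)\to\Hom\big((F'\cap R)/[F,R],\Omega\big)$ and identify $B_0(L)$ with an annihilator. The step you flag --- upgrading Batten's abstract isomorphism to nondegeneracy of $\Phi$ --- is a real gap in the write-up but is fixable in finite dimensions: injectivity of $\Phi$ follows by extending $\phi_f|_R$ to a linear functional on $F$ vanishing on $F'$ (possible precisely because $\phi_f$ kills $F'\cap R$), which lets you modify the lift into a splitting of $E_f$ and exhibit $f$ as a coboundary; injectivity together with Batten's dimension equality $\dim H^2(L,\Omega)=\dim (F'\cap R)/[F,R]$ then forces $\Phi$ to be an isomorphism. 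So the repair is: keep your duality argument as an alternative proof of Theorem \ref{cor}, and add the citation of \cite[Proposition 4.1]{Niroomand} to pass from the Hopf quotient to $B(L)$.
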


As an application we answer a couple of questions of Kunyavski\u{\i} \cite{Kunyavski}. He asked the following questions for finite dimensional Lie algebras $L$:

\begin{question}\label{q1}\cite[Question 7.1]{Kunyavski}\label{q1}
Can the dimension of $B(L)$ be as large as possible?
\end{question}

\begin{question}\label{q2}\cite[Question 7.2]{Kunyavski}\label{q2}
Is $B(L)$ invariant under isoclinism of Lie algebras?
\end{question}
Two Lie algebras $L$ and $K$ are said to be isoclinic if there exist isomorphisms $\alpha: L/Z(L) \mapsto K/Z(K)$ and $\beta : L' \mapsto K'$ such that the following diagram commutes. 
\begin{center}
                        $ \begin{CD}
\frac{L}{Z(L)} \times \frac{L}{Z(L)}        @>\phi >>  L'    \\
@V \alpha \times \alpha VV             @V \beta VV\\
\frac{K}{Z(K)}\times \frac{K}{Z(K)}          @> \theta >>     K'\\  
\end{CD}$
\end{center}
where \[\phi\Big(l_1 +Z(L), l_2+Z(L)\Big) = [l_1,l_2] \ \ \text{for}\ \  l_1,l_2 \in L\] and
\[\theta\Big(k_1 +Z(K), k_2+Z(K)\Big) = [k_1,k_2] \ \ \text{for} \ \ k_1, k_2 \in K.\] 
The pair $(\alpha, \beta)$ is called an isoclinism between $L$ and $K$.\\

In the following theorems we give an affirmative answer to Questions \ref{q1} and \ref{q2}.

\begin{theorem}\label{thm1}
Let $n \geq 1$ be a natural number. There exists a finite dimensional nilpotent Lie algebra $L$ of nilpotency class 2 such that dimension of $B(L)$ is greater than or equal to $n$. 
\end{theorem}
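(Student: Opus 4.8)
The plan is to reduce, via Corollary \ref{cor1}, to producing for each $n$ a finite dimensional class-$2$ nilpotent $L$ with $\dim B_0(L) \geq n$, and to arrange $L$ to be explicit enough that $B_0(L)$ is computed by linear algebra on an exterior square. Fix a finite dimensional vector space $V$ over $\Omega$, put $d = \dim V$, choose a subspace $D \subseteq \wedge^2 V$, set $W = (\wedge^2 V)/D$ with quotient map $\beta \colon \wedge^2 V \to W$, and define $L = V \oplus W$ with $W$ central and $[v_1, v_2] = \beta(v_1 \wedge v_2)$ for $v_1, v_2 \in V$. Since every iterated bracket lands in the central summand $W$, the Jacobi identity holds automatically, $L$ is nilpotent of class $2$, and $L' = W$ because decomposable elements span $\wedge^2 V$. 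Thus $L$ is a finite dimensional class-$2$ nilpotent Lie algebra depending only on the pair $(V,D)$.

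First I would compute $B_0(L)$ directly from the cohomological definition. Writing a $2$-cocycle $f$ in block form along $L = V \oplus W$, the defining condition of $B_0(L)$ forces the $V \times W$ and $W \times W$ blocks to vanish (take one argument, or both, in $W$, where the bracket is automatically zero), so every class in $B_0(L)$ is represented by an alternating form supported on $V$, that is, by a functional $\varphi \in (\wedge^2 V)^{*}$. The remaining requirement that $f(v_1,v_2) = 0$ whenever $[v_1,v_2] = 0$ says exactly that $\varphi$ annihilates every decomposable $v_1 \wedge v_2$ lying in $D = \ker\beta$; here one uses that the vanishing condition is independent of the chosen representative, because a coboundary $(x_1,x_2) \mapsto -\sigma([x_1,x_2])$ already vanishes on commuting pairs. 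Finally the coboundaries among such forms are precisely the $\varphi$ factoring through $\beta$, namely those annihilating all of $D$. Writing $\langle D\rangle_{\mathrm{dec}}$ for the span of the decomposable vectors contained in $D$, this identifies
\[
B_0(L) \;\cong\; \big(D/\langle D\rangle_{\mathrm{dec}}\big)^{*}, \qquad \dim B_0(L) = \dim D - \dim \langle D\rangle_{\mathrm{dec}}.
\]
In particular, if $D$ contains no nonzero decomposable element, then $\dim B_0(L) = \dim D$. This is the exact Lie-theoretic analogue of Moravec's description of the group-theoretic $B_0$.

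It then remains to exhibit, for every $n$, a space $V$ and a subspace $D \subseteq \wedge^2 V$ with $\dim D = n$ containing no nonzero decomposable element, and this is the heart of the argument. Identifying the nonzero decomposable elements with the affine cone over the Grassmannian $\mathrm{Gr}(2,V) \subseteq \mathbb{P}(\wedge^2 V)$, of projective dimension $2(d-2)$ inside $\mathbb{P}^{\binom{d}{2}-1}$, the incidence dimension count shows that a generic linear subspace with $(\dim D - 1) + 2(d-2) < \binom{d}{2} - 1$, i.e. $\dim D \leq \binom{d-2}{2}$, satisfies $\mathbb{P}(D) \cap \mathrm{Gr}(2,V) = \emptyset$. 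Hence over an infinite field, once $d$ is large enough that $\binom{d-2}{2} \geq n$, a generic $n$-dimensional $D$ meets the cone only in $0$; over a finite field the same conclusion follows from a counting estimate, since the number of nonzero decomposable vectors is too small to force every $n$-dimensional subspace to contain one once $d$ is large. For such $D$ the algebra $L$ above has $\dim B(L) = \dim B_0(L) = n \geq n$ by Corollary \ref{cor1}, which proves the theorem.

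The step I expect to be genuinely delicate is the construction of $D$: one must guarantee not merely that a ``random'' subspace avoids decomposables, but that an honest subspace defined over the prescribed field $\Omega$ does so. Over infinite fields this rests on rationality of the ambient Grassmannian of subspaces, so that the nonempty Zariski-open avoidance condition has $\Omega$-points; over finite fields one must push the counting estimate carefully to stay below the threshold $\binom{d-2}{2}$. By contrast the cohomological evaluation of $B_0(L)$, while the technical core, reduces to the routine block decomposition of cocycles once the construction is fixed.
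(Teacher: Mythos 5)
Your proposal is correct and, at the level of strategy, coincides with the paper's: both take $L$ to be the free class-$2$ nilpotent algebra on $d$ generators modulo a central subspace $D$ of $\wedge^2 V$ (your $V\oplus(\wedge^2 V)/D$ is precisely the paper's $L_d/M$ with $M=D$), and both reduce the theorem to exhibiting an $n$-dimensional $D$ containing no nonzero decomposable bivector; your formula $\dim B_0(L)=\dim D-\dim\langle D\rangle_{\mathrm{dec}}$ is exactly the paper's $\dim\bigl(M\cap L_d'\bigr)-\dim\gen{K(L_d)\cap M}$. The execution differs at both steps, though. For the cohomological step the paper does not argue directly on cocycles: it first proves $B_0(L_d)=0$ (Theorem \ref{thm}, by exhibiting an explicit coboundary) and then invokes the exact sequence of Theorem \ref{thm0} to obtain $B_0(L_d/M)\cong \frac{M\cap L_d'}{\gen{K(L_d)\cap M}}$; your block decomposition of cocycles supported on $V\times V$ is a sound, self-contained substitute (the key points --- that a representative of a class in $B_0$ must vanish whenever one argument is central, that such pulled-back forms are automatically cocycles, and that the coboundaries among them are exactly the functionals killing all of $D$ --- all check out). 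For the construction of $D$ the routes genuinely diverge: the paper takes the explicit subspace spanned by $[x_{4k+1},x_{4k+2}]+[x_{4k+3},x_{4k+4}]$, $k=0,\dots,n-1$, with pairwise disjoint index blocks, and verifies $K(L_d)\cap M=0$ by an elementary coefficient computation valid over every field; you instead take a generic $D$ avoiding the cone over $\mathrm{Gr}(2,V)$, which is correct but costs you exactly the two delicate points you flag --- density of $\Omega$-rational points in the parameter Grassmannian when $\Omega$ is infinite, and a separate counting estimate when $\Omega$ is finite --- neither of which arises in the explicit approach. Your route buys a sharper bound ($d$ of order $\sqrt{2n}$ generators instead of $d>4n$) and the insight that avoidance is generic; the paper's buys field-uniformity with nothing beyond linear algebra. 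If you want to keep your cocycle computation but eliminate the delicate step, simply plug in the paper's explicit $D$: a bivector is decomposable if and only if its antisymmetric coefficient matrix has rank at most $2$, while any nonzero combination of the $n$ disjointly supported elements above has rank at least $4$.
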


\begin{theorem}\label{thm2}
Let $L$ and $M$ be two isoclinic finite dimensional Lie Algebras over the field $\Omega$. Then $B(L) \cong B(M)$.
\end{theorem}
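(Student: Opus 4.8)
The plan is to prove that isoclinic Lie algebras have isomorphic Bogomolov multipliers. By Corollary~\ref{cor1} we have $B(L) \cong B_0(L)$, so it suffices to establish $B_0(L) \cong B_0(M)$. The object $B_0(L)$ lives inside $H^2(L,\Omega)$ and is cut out by the cocycle condition that $f$ vanishes on commuting pairs. Since an isoclinism $(\alpha,\beta)$ is precisely a compatible pair of isomorphisms $\alpha\colon L/Z(L)\to M/Z(M)$ and $\beta\colon L'\to M'$ that respect the commutator maps $\phi$ and $\theta$, the natural strategy is to transport cocycles across $\alpha$ and check that the subspace $B_0$ is preserved.

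First I would reduce to the level of cocycles. A $2$-cocycle $f\in Z^2(L,\Omega)$ that represents a class in $B_0(L)$ vanishes whenever $[x_1,x_2]=0$; in particular $f(x_1,x_2)=0$ as soon as either argument lies in $Z(L)$. Hence every such $f$ factors through $L/Z(L)$, i.e.\ $f$ is induced from a well-defined alternating bilinear map $\bar f\colon L/Z(L)\times L/Z(L)\to\Omega$. This is the crucial observation: although a general element of $H^2(L,\Omega)$ does not factor through the center, the defining condition of $B_0(L)$ forces exactly this factorization. Conversely, the cocycle identity and the coboundary relation $f(x_1,x_2)=-\sigma([x_1,x_2])$ can be read off from data on $L/Z(L)$ together with the commutator map $\phi$ into $L'$. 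The aim is to show that $B_0(L)$ depends only on the pair $\bigl(L/Z(L),\,\phi\colon L/Z(L)\times L/Z(L)\to L'\bigr)$, which is exactly the data an isoclinism preserves up to isomorphism.

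Next I would construct the isomorphism explicitly. Given $\bar f$ on $L/Z(M)\times\cdots$, define $g$ on $M$ by pulling back along $\alpha$: set $g(y_1,y_2)=f\bigl(\alpha^{-1}(y_1+Z(M)),\alpha^{-1}(y_2+Z(M))\bigr)$, interpreting the right-hand side via the factored map $\bar f$. One must verify that $g$ is a $2$-cocycle on $M$, that it vanishes on commuting pairs of $M$ (so $g$ represents a class in $B_0(M)$), and that the assignment $f\mapsto g$ descends to a well-defined map on cohomology classes, sending coboundaries to coboundaries. The cocycle identity for $g$ follows from that of $f$ because $\alpha$ is a Lie algebra isomorphism $L/Z(L)\to M/Z(M)$ and, through commutativity of the isoclinism square, $\beta$ matches the commutator map $\phi$ with $\theta$; thus the Jacobi-type $2$-cocycle relation, which only involves brackets, is transported faithfully. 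The vanishing condition is preserved because $[y_1,y_2]=0$ in $M$ corresponds under $\alpha^{-1}$ and $\beta$ to $[x_1,x_2]=0$ in $L$, using that $\beta$ is injective on $L'$.

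The main obstacle I anticipate is the careful bookkeeping of coboundaries under this transport, ensuring the map is well-defined at the level of $H^2$ rather than merely on cocycles. A coboundary on $L$ has the form $f(x_1,x_2)=-\sigma([x_1,x_2])$ for a linear $\sigma\colon L\to\Omega$; its restriction to the $B_0$ setting and the corresponding construction on $M$ require that $\sigma$ be transferable via $\beta$ on $L'=\im\phi$, and one must check that the value of $g$ does not depend on the choice of coset representatives $\alpha^{-1}(y_i+Z(M))$. The factorization through $L/Z(L)$ established in the second step is what guarantees representative-independence, and the commutativity of the isoclinism diagram is what guarantees that brackets (hence the cocycle and coboundary conditions) match up. Once these compatibilities are verified, the inverse map is built symmetrically from $(\alpha^{-1},\beta^{-1})$, yielding a mutually inverse pair of homomorphisms and hence $B_0(L)\cong B_0(M)$, which combined with Corollary~\ref{cor1} gives $B(L)\cong B(M)$.
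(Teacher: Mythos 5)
Your proposal is correct and follows essentially the same route as the paper: transporting a $B_0$-cocycle from $L$ to $M$ via $\alpha^{-1}$ (using that vanishing on commuting pairs forces the cocycle to factor through $L/Z(L)$, which gives well-definedness), then checking the cocycle, vanishing, and coboundary compatibilities, and obtaining the inverse map symmetrically from the inverse isoclinism. The paper's proof (via the map $c_f$ and Lemmas 3.8--3.9) carries out exactly these verifications.
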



\section{Hopf-Type Formula}

Consider a finite dimensional Lie algebra $L$ over a field $\Omega$, a central ideal $H$ of $L$, and a trivial $L$-module $A$. The restriction map $\Res: \Hom(L, A) \rightarrow \Hom(H, A)$ is defined as follows: for a homomorphism $f: L \rightarrow A$, $\Res(f)$ is the restriction of $f$ to $H$. There is also an inflation map $\Inf: \Hom(L/H, A) \rightarrow \Hom(L, A)$ defined by sending a homomorphism $\alpha \in \Hom(L/H, A)$ to the homomorphism $\alpha' \in \Hom(L, A)$ defined by $\alpha'(x, y) = \alpha(\beta(x), \beta(y))$ for all $x, y \in L$, where $\beta: L \rightarrow L/H$ is the natural group homomorphism. Another inflation map $\Inf: H^2(L/H, A) \rightarrow H^2(L, A)$ is defined similarly by sending $[\alpha] \in H^2(L/H, A)$ to $[\alpha'] \in H^2(L, A)$ where $\alpha'(x, y) = \alpha(\beta(x), \beta(y))$ for all $x, y \in L$.  Next, we define a transgression map $\Tra: \Hom(H, A) \rightarrow H^2(L/H, A)$ as follows: for a fixed section $\mu$ of $\beta$, define a map $f: L/H \times L/H \rightarrow L$ by $f(x, y) = [\mu(x), \mu(y)] - \mu([x,y])$ for all $x, y \in L/H$. Given a homomorphism $\chi \in \Hom(H, A)$, we can verify that $\chi f \in Z^2(L/H, A)$ and that the cohomology class of $\chi f$ does not depend on the choice of $\mu$. The transgression map $\Tra$ is then defined as the map that sends a homomorphism $\chi$ to the cohomology class of $\chi f$

We are now ready to quote some results required for our subsequent investigations. The following 5-term exact sequence was established by P. Batten in her Ph.D. Thesis \cite[Theorem 3.1]{Batten}
\begin{theorem} \label{thm6} Let $L$ be a Lie algebra, $H$ be  central ideal of $L$, $1 \mapsto H \mapsto L \mapsto L/H \mapsto 1$ be the natural exact sequence and $A$ be a trivial $L$-module. Then the induced sequence 
\begin{multline}\label{seq1}
1 \xrightarrow{} \Hom(L/H, A) \xrightarrow{\Inf} \Hom(L, A) \xrightarrow{\Res} \Hom(H, A) \xrightarrow{\Tra} H^2(L/H, A)\\
 \xrightarrow{\Inf} H^2(L, A)
\end{multline}
is exact.
\end{theorem}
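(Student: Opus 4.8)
The plan is to establish exactness separately at each of the four relevant positions of the sequence \eqref{seq1}, relying throughout on the fact that since $H$ is a \emph{central} ideal we have $[L,H]=0$; in particular $H$ is abelian, so every linear map $H\to A$ is automatically a Lie homomorphism, and for any section $\mu$ of $\beta$ the ``curvature'' term $c(x,y):=[\mu(x),\mu(y)]-\mu([x,y])$ satisfies $\beta(c(x,y))=[x,y]-[x,y]=0$, i.e. $c(x,y)\in H$, so that $\Tra(\chi)=[\chi\circ c]$ makes sense. I would also fix, once and for all, the vector space splitting $L=\mu(L/H)\oplus H$ induced by a linear section $\mu$, since all the constructions below are built from it. The first two spots are formal. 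Exactness at $\Hom(L/H,A)$ is the injectivity of $\Inf$, which is immediate from surjectivity of $\beta$. At $\Hom(L,A)$, the inclusion $\im(\Inf)\subseteq\ker(\Res)$ holds because $\beta|_H=0$, while $\ker(\Res)\subseteq\im(\Inf)$ follows from the universal property of the quotient: an $f\in\Hom(L,A)$ with $f|_H=0$ factors as $f=\alpha\circ\beta$ for a well-defined $\alpha\in\Hom(L/H,A)$.

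For exactness at $\Hom(H,A)$ I would argue as follows. If $\chi=\Res(f)=f|_H$, then using that $f$ is a homomorphism into the abelian $A$ one gets $\chi(c(x,y))=f([\mu x,\mu y])-f(\mu[x,y])=-\sigma([x,y])$ with $\sigma:=f\circ\mu$, so $\chi\circ c$ is a coboundary and $\Tra(\chi)=0$. Conversely, suppose $\Tra(\chi)=0$, so $\chi(c(x,y))=-\sigma([x,y])$ for some linear $\sigma\colon L/H\to A$. I would then \emph{extend} $\chi$ to a homomorphism by setting $f(\mu(x)+h):=\sigma(x)+\chi(h)$ on the splitting above. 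This $f$ clearly restricts to $\chi$ on $H$; checking that it kills brackets uses $[\mu x+h,\mu y+k]=[\mu x,\mu y]=\mu([x,y])+c(x,y)$ (centrality of $H$) together with the relation defining $\sigma$, giving $f([\,\cdot\,,\cdot\,])=\sigma([x,y])+\chi(c(x,y))=0$. Hence $\chi\in\im(\Res)$. The only delicate point here is getting the signs in the defining relation of a $2$-coboundary to cooperate with the definition of $f$.

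Exactness at $H^2(L/H,A)$ is the heart of the matter. For $\im(\Tra)\subseteq\ker(\Inf)$, given $\chi$ I would write down the explicit trivializing linear map $\tau\colon L\to A$, $\tau(a):=\chi\big(a-\mu(\beta(a))\big)$, which is well defined since $a-\mu\beta(a)\in H$. A direct computation, again using $[a,b]=[\mu\beta a,\mu\beta b]$, shows $\tau([a,b])=\chi\big(c(\beta a,\beta b)\big)=\Inf(\chi\circ c)(a,b)$, so the inflated cocycle is the coboundary of $-\tau$ and $\Inf\circ\Tra=0$. For the reverse inclusion $\ker(\Inf)\subseteq\im(\Tra)$ I start from $[\alpha]\in H^2(L/H,A)$ with $\Inf[\alpha]=0$, i.e. a linear $\sigma\colon L\to A$ with $\alpha(\beta a,\beta b)=-\sigma([a,b])$. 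Evaluating at $a=\mu x,\ b=\mu y$ and expanding $[\mu x,\mu y]=\mu([x,y])+c(x,y)$ yields $\alpha(x,y)=-\sigma(\mu[x,y])-\sigma(c(x,y))$. Setting $\chi:=-\sigma|_H\in\Hom(H,A)$ and $\rho:=\sigma\circ\mu$, this rearranges to $\alpha(x,y)-\chi(c(x,y))=-\rho([x,y])$, so $\alpha$ and $\chi\circ c$ differ by a coboundary of $L/H$, giving $[\alpha]=\Tra(\chi)$.

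I expect the main obstacle to be precisely this last inclusion: one must manufacture the preimage $\chi$ out of a map $\sigma$ that a priori lives on all of $L$, and then verify that $\Tra(\chi)$ recovers $[\alpha]$ up to a genuine $L/H$-coboundary rather than merely an $L$-coboundary. This forces a careful decomposition of $\sigma$ along $L=\mu(L/H)\oplus H$ and an equally careful sign-tracking through the $2$-coboundary convention $f(x_1,x_2)=-\sigma([x_1,x_2])$; the repeated use of the centrality identity $[a,b]=[\mu\beta a,\mu\beta b]$ is what makes every cross term lie in $H$ and thus be detected by $\chi$.
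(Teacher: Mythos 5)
Your proof is correct, but there is nothing in the paper to compare it against: the paper does not prove this statement at all. It is quoted as background from P.~Batten's Ph.D.\ thesis \cite[Theorem 3.1]{Batten}, so your write-up is in effect supplying a proof the paper omits. Checking it on its own merits, all four exactness verifications go through. The two formal spots (injectivity of $\Inf$ and $\im\Inf=\ker\Res$) are fine. At $\Hom(H,A)$, your extension $f(\mu(x)+h):=\sigma(x)+\chi(h)$ really is a homomorphism: by centrality $[\mu x+h,\mu y+k]=[\mu x,\mu y]=\mu([x,y])+c(x,y)$, so $f$ applied to any bracket gives $\sigma([x,y])+\chi(c(x,y))=0$ precisely by the hypothesis $\chi(c(x,y))=-\sigma([x,y])$; and in the forward direction $f([\mu x,\mu y])=0$ (homomorphism into abelian $A$) correctly reduces $\chi\circ c$ to the coboundary of $f\circ\mu$. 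At $H^2(L/H,A)$, the explicit trivialization $\tau(a)=\chi\bigl(a-\mu\beta(a)\bigr)$ and, conversely, the extraction $\chi:=-\sigma|_H$, $\rho:=\sigma\circ\mu$ are both right, and your signs match the paper's coboundary convention $f(x_1,x_2)=-\sigma([x_1,x_2])$; note $\chi$ is automatically a homomorphism because $H$ is abelian.

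Two points deserve to be made explicit rather than implicit. First, you need $\mu$ to be a \emph{linear} section; this exists because $\beta$ is a surjection of vector spaces, and it is what the paper's construction tacitly assumes anyway, since $\chi f$ must be bilinear to lie in $Z^2(L/H,A)$. Second, all your computations use a single fixed $\mu$, which is legitimate because the paper's setup already records that the class $\Tra(\chi)=[\chi\circ c]$ is independent of the choice of section. With those caveats noted, your route --- direct cocycle manipulation along the splitting $L=\mu(L/H)\oplus H$ --- is the standard argument and is presumably essentially the one in Batten's thesis.
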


Let $L$ be a lie algebra and $T$ be a subset of $L$. By $\gen{T}$ we denote the subspace of $L$ generated by $T$ and by $\Hom_{T}(L, A)$ we denote the set of those homomorphisms which maps $T$ to 0. The following lemma is instrumental in our subsequent investigations.

\begin{lemma}\label{lem0}
 Let $L$ be a finite dimensional Lie algebra over the field $\Omega$ and $H$ be a central ideal of $L$.  Then $\Tra(\lambda) \in B_0(L/H)$ if, and only if $\lambda \in \Hom_{T}(H, \Omega)$ where $T = \gen{K(L) \cap H}$.
\end{lemma}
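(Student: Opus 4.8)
The plan is to reduce the membership $\Tra(\lambda) \in B_0(L/H)$ to an explicit vanishing condition on the cocycle $\lambda f$ that represents it, and then to match that condition with $\lambda \in \Hom_T(H,\Omega)$ using that $H$ is central. The first observation I would make is that the defining condition of $B_0$ is really a property of an entire cohomology class, not of a chosen representative: every $2$-coboundary has the form $(x_1,x_2)\mapsto -\sigma([x_1,x_2])$ and hence automatically vanishes on every commuting pair (those with $[x_1,x_2]=0$). Therefore, if two cocycles differ by a coboundary, one vanishes on all commuting pairs exactly when the other does. Consequently $\Tra(\lambda)=[\lambda f]\in B_0(L/H)$ if and only if the single cocycle $\lambda f$ vanishes on every commuting pair of $L/H$, where $f(x,y)=[\mu(x),\mu(y)]-\mu([x,y])$ for a fixed section $\mu$ of $\beta$.

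Next I would evaluate $\lambda f$ on a commuting pair. If $x,y\in L/H$ satisfy $[x,y]=0$, then $\mu([x,y])=0$, so $\lambda f(x,y)=\lambda([\mu(x),\mu(y)])$; moreover $\beta([\mu(x),\mu(y)])=[x,y]=0$ forces $[\mu(x),\mu(y)]\in H$, whence $[\mu(x),\mu(y)]\in K(L)\cap H$. The decisive point, and the one place the hypothesis that $H$ is central enters, is that for any preimages $a\in\beta^{-1}(x)$ and $b\in\beta^{-1}(y)$ we may write $\mu(x)=a+h_1$ and $\mu(y)=b+h_2$ with $h_1,h_2\in H$, and centrality gives $[\mu(x),\mu(y)]=[a,b]$; in particular the value is independent of the section. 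This produces the set equality $\{\lambda f(x,y):[x,y]=0\}=\{\lambda(h):h\in K(L)\cap H\}$, since every commuting pair yields a value $\lambda([a,b])$ with $[a,b]\in K(L)\cap H$, and conversely every $h=[a,b]\in K(L)\cap H$ arises from the commuting pair $(\beta(a),\beta(b))$.

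Finally I would conclude: by the preceding paragraph, $\lambda f$ vanishes on all commuting pairs if and only if $\lambda$ annihilates $K(L)\cap H$, which by linearity of $\lambda$ is equivalent to $\lambda$ annihilating the subspace $T=\gen{K(L)\cap H}$, that is, to $\lambda\in\Hom_T(H,\Omega)$. Combining this with the class-invariance reduction of the first paragraph gives the stated equivalence. I expect the only genuinely delicate step to be the centrality computation that identifies $[\mu(x),\mu(y)]$ with a fixed commutator in $K(L)\cap H$ independent of all choices; once the $B_0$-condition is recognized as a property of the cohomology class, the rest is formal.
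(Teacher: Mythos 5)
Your proposal is correct and is essentially the paper's own argument: both reduce $\Tra(\lambda)\in B_0(L/H)$ to the vanishing of the single representative $\lambda f$ on commuting pairs (your class-invariance observation is what the paper uses implicitly), and both use centrality of $H$ to identify those values with $\lambda$ evaluated on $K(L)\cap H$, hence on $T$ by linearity. One small point: your step $\mu([x,y])=0$ when $[x,y]=0$ needs the normalization $\mu(0)=0$, which the paper imposes explicitly and which is harmless to assume since the cohomology class of $\lambda f$ does not depend on the choice of section.
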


\begin{proof}
 Let $\mu: L/H \rightarrow L$ be a section such that $\mu(0) = 0$ and let $\Tra$ be defined using $\mu$ as $\Tra(\lambda) = [\lambda f]$, where \[ f(x, y) = [\mu(x), \mu(y)] - \mu([x,y]) \ \ \forall \ x, y \in L/H.\]
Let $\lambda \in  \Hom_{T}(H, \Omega)$ and $x, y \in L/H$ be such that $[x,y] = 0.$ Then $\lambda f(x, y) = \lambda([\mu(x), \mu(y)]).$ But $[\mu(x), \mu(y)] \in T.$
Therefore $\lambda f(x,y) = 0$. This proves that $[\lambda f] \in B_0(L/H).$ Thus $\Tra(\lambda) \in B_0(L/H)$. Conversly, suppose that $\Tra(\lambda) \in B_0(L/H)$. Let $l_1, l_2 \in L$ such that $[l_1, l_2] \in H.$ Notice that $[l_1, l_2] = [\mu(l_1+H), \mu(l_2+H)]$ because $H$ is central. Also $\mu([l_1+H, l_2+H]) = \mu([l_1, l_2]+H) = 0$ because $[l_1, l_2] \in H$ and $\mu(0)=0.$ Therefore,
\[\lambda([l_1,l_2] = \lambda([\mu(l_1+H), \mu(l_2+H)])-\lambda\mu([l_1+H, l_2+H]) = \lambda f(l_1+H, l_2+H).\]
Since $\Tra(\lambda) = [\lambda f] \in B_0(L/H)$ and $[l_1+H, l_2+H] = 0$ in $L/H$ we have that $ \lambda f(l_1+H, l_2+H) = 0$. Thus $\lambda([l_1, l_2]) = 0$ whenever $[l_1, l_2] \in H.$ This proves that  $\lambda \in \Hom_{T}(H, \Omega).$   
\end{proof}

\begin{theorem} \label{thm7}  Let $L$ be a finite dimensional Lie algebra over the field $\Omega$, $H$ be  central ideal of $L$, and $T = \gen{K(L) \cap H}$. Then the induced sequence 
\begin{multline}
1 \xrightarrow{} \Hom(L/H, \Omega) \xrightarrow{\Inf} \Hom(L, \Omega) \xrightarrow{\Res} \Hom_{T}(H, \Omega) \xrightarrow{\tra} B_0(L/H) \\ \xrightarrow{\inff} B_0(L)
\end{multline}
is exact, where $\tra$ and $\inff$ are the restrictions of $\Tra$
 and $\Inf$. 
 \end{theorem}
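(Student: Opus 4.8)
The plan is to obtain the stated sequence by restricting the domains and codomains of Batten's five-term exact sequence (Theorem~\ref{thm6}, taken with $A = \Omega$), using Lemma~\ref{lem0} to control the two right-hand terms. Before touching exactness I would first check that each arrow really lands in the smaller target claimed. For $\Res$ this is immediate: a Lie algebra homomorphism $f\colon L \to \Omega$ into the abelian algebra $\Omega$ kills every bracket, so it annihilates $K(L)$ and hence $T = \gen{K(L)\cap H}$; thus $\Res(f) = f|_H \in \Hom_T(H,\Omega)$. That $\tra = \Tra|_{\Hom_T(H,\Omega)}$ takes values in $B_0(L/H)$ is exactly the forward implication of Lemma~\ref{lem0}. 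Finally $\inff = \Inf|_{B_0(L/H)}$ lands in $B_0(L)$ because $\beta$ is a homomorphism: if a cocycle $\alpha$ representing a class of $B_0(L/H)$ vanishes on commuting pairs of $L/H$, then $[x,y]=0$ in $L$ forces $[\beta(x),\beta(y)] = \beta([x,y]) = 0$, so the inflated cocycle $\alpha'(x,y) = \alpha(\beta(x),\beta(y))$ vanishes on commuting pairs of $L$ and represents a class of $B_0(L)$.

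With the maps well defined, exactness at the first two spots is inherited verbatim. Injectivity of $\Inf$ at $\Hom(L/H,\Omega)$ is part of Theorem~\ref{thm6}. At $\Hom(L,\Omega)$ one has $\ker(\Res) = \im(\Inf)$, and this equality is unaffected by shrinking the codomain of $\Res$ from $\Hom(H,\Omega)$ to $\Hom_T(H,\Omega)$, since the kernel of a map is unchanged when its codomain is restricted. The point that genuinely uses Lemma~\ref{lem0} is exactness at the two right-hand terms. At $\Hom_T(H,\Omega)$ I would write $\ker(\tra) = \Hom_T(H,\Omega)\cap\ker(\Tra)$; but $\ker(\Tra) = \im(\Res)$ by Theorem~\ref{thm6}, and we already observed $\im(\Res)\subseteq\Hom_T(H,\Omega)$, so $\ker(\Tra)$ sits inside $\Hom_T(H,\Omega)$ and the intersection collapses to $\ker(\Tra) = \im(\Res)$.

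The decisive step is exactness at $B_0(L/H)$. Since $B_0(L)\subseteq H^2(L,\Omega)$, a class of $B_0(L/H)$ lies in $\ker(\inff)$ precisely when it lies in $\ker\big(\Inf\colon H^2(L/H,\Omega)\to H^2(L,\Omega)\big)$, which equals $\im(\Tra)$ by Theorem~\ref{thm6}; hence $\ker(\inff) = B_0(L/H)\cap\im(\Tra)$. The inclusion $\im(\tra)\subseteq\ker(\inff)$ is clear, and for the reverse I would take any $[\alpha] = \Tra(\lambda) \in B_0(L/H)$ with $\lambda\in\Hom(H,\Omega)$ and invoke the converse direction of Lemma~\ref{lem0} to upgrade $\lambda$ to an element of $\Hom_T(H,\Omega)$, whence $[\alpha] = \tra(\lambda)\in\im(\tra)$. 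This last equality is where I expect the only real subtlety to lie: the bare restriction argument yields merely $\ker(\inff) = B_0(L/H)\cap\im(\Tra)$, and it is Lemma~\ref{lem0} alone that identifies this intersection with the image of the transgression restricted to $\Hom_T(H,\Omega)$ rather than to all of $\Hom(H,\Omega)$. Everything else is a routine verification that passing to subspaces and subgroups preserves the kernel--image relations of Batten's sequence.
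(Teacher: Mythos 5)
Your proposal is correct and follows exactly the paper's route: the paper's own proof consists precisely of citing the exactness of Batten's sequence (Theorem \ref{thm6}), Lemma \ref{lem0}, and the two observations that $\Res(\Hom(L,\Omega)) \leq \Hom_T(H,\Omega)$ and $\Inf(B_0(L/H)) \leq B_0(L)$, which are the same ingredients you use. Your write-up simply fills in the routine kernel--image verifications that the paper leaves implicit, including the key use of both directions of Lemma \ref{lem0} at the term $B_0(L/H)$.
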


\begin{proof}
The theorem follows from the exactness of the Sequence \ref{seq1}, Lemma \ref{lem0} and the following straight forward observations:

\begin{itemize}
    \item[(i)]  $\Res(\Hom(L, \Omega) \leq \Hom_{T}(H, \Omega)$.
    \item[(ii)] $\Inf(B_0(L/H) \leq B_0(L).$
\end{itemize}
\end{proof}

The next theorem follows from Lemma \ref{lem0} and \cite[Corollary 3.7]{Batten}.

\begin{theorem}\label{thm8}
Let $L$ be a finite dimensional Lie algebra, $L^*$ be its cover with $A \leq L^*$ satisfying the following three conditions 
\begin{enumerate}
\item $A \leq Z(L^*) \cap [L^*, L^*]$;
\item $A \cong M(L)$;
\item $L \cong L^*/A,$
\end{enumerate}
and $T = \gen{K(L^*) \cap A}$. Then the map $\tra:  \Hom_{T}(A, \Omega) \mapsto B_0(L)$ is bijective.
\end{theorem}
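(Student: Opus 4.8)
The plan is to realize $\tra$ as the restriction of the full transgression map and to exploit the fact that, for a cover, this full transgression is already a bijection. First I would specialize the transgression of Theorem~\ref{thm6} to $L^*$ and its central ideal $A$: since $L^*/A \cong L$ by hypothesis~(3), the connecting map there reads
\[\Tra \colon \Hom(A, \Omega) \to H^2(L, \Omega).\]
The point of the cover hypotheses is precisely to make this map an isomorphism.

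Next I would invoke \cite[Corollary 3.7]{Batten}: because $A \le Z(L^*) \cap [L^*, L^*]$ and $A \cong M(L)$, the algebra $L^*$ is a cover of $L$, and for such a cover the transgression $\Tra \colon \Hom(A, \Omega) \to H^2(L, \Omega)$ is a bijection. Heuristically this is forced by two facts that I would mention but not belabor: the restriction $\Res \colon \Hom(L^*, \Omega) \to \Hom(A, \Omega)$ vanishes, since $A \le [L^*, L^*]$ and every homomorphism into the abelian object $\Omega$ factors through $L^*/[L^*, L^*]$, so exactness of the five-term sequence makes $\Tra$ injective; and $\dim_\Omega \Hom(A, \Omega) = \dim_\Omega A = \dim_\Omega M(L) = \dim_\Omega H^2(L, \Omega)$, which upgrades injectivity to bijectivity. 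I would simply cite Batten's corollary for this conclusion.

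The decisive step is to pin down which elements of $\Hom(A, \Omega)$ are carried by $\Tra$ into $B_0(L)$. Here I would apply Lemma~\ref{lem0} with $L$ replaced by $L^*$ and $H = A$: for every $\lambda \in \Hom(A, \Omega)$ one has $\Tra(\lambda) \in B_0(L^*/A) = B_0(L)$ if and only if $\lambda \in \Hom_{T}(A, \Omega)$, where $T = \gen{K(L^*) \cap A}$. In other words, $\Hom_{T}(A, \Omega)$ is \emph{exactly} the preimage $\Tra^{-1}\!\big(B_0(L)\big)$ inside $\Hom(A, \Omega)$. This identification of the subspace appearing in the statement is the conceptual heart of the argument, and it is supplied entirely by Lemma~\ref{lem0}.

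Finally I would combine the two inputs. A bijection restricts to a bijection between any subset and its image, so restricting the bijection $\Tra$ to $\Tra^{-1}\!\big(B_0(L)\big) = \Hom_{T}(A, \Omega)$ yields a bijection onto $B_0(L)$; but this restriction is by definition the map $\tra$. Hence $\tra \colon \Hom_{T}(A, \Omega) \to B_0(L)$ is bijective, as claimed. The only genuinely nontrivial ingredient is the bijectivity of the full transgression furnished by Batten's corollary, and I expect that — rather than any of the surrounding bookkeeping — to be the part one must take care over; everything else follows formally once Lemma~\ref{lem0} identifies the relevant preimage.
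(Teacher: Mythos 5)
Your proposal is correct and follows exactly the paper's route: the paper proves Theorem~\ref{thm8} by simply citing Lemma~\ref{lem0} (to identify $\Hom_{T}(A,\Omega)$ as the preimage of $B_0(L)$ under the full transgression) together with \cite[Corollary 3.7]{Batten} (for the bijectivity of $\Tra\colon \Hom(A,\Omega)\to H^2(L,\Omega)$ when $L^*$ is a cover). Your write-up merely makes explicit the restriction argument that the paper leaves implicit, so the two proofs are essentially identical.
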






\begin{corollary}
Let $L$ be a finite dimensional Lie algebra and $L^*$ be its cover with $A \leq L^*$ satisfying the following three conditions 
\begin{enumerate}
\item $A \leq Z(L^*) \cap [L^*, L^*]$;
\item $A \cong M(L)$;
\item $L \cong L^*/A$.
\end{enumerate}
Then $B_0(L) \cong \frac{A}{\gen{K(L^*) \cap A}}$.
\end{corollary}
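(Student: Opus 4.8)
The plan is to deduce the corollary directly from Theorem~\ref{thm8} by promoting the set-theoretic bijection furnished there to a chain of vector-space isomorphisms. First I would observe that the transgression map $\tra$ is not merely a bijection but is $\Omega$-linear. Indeed, since $\tra(\chi) = [\chi f]$ for the fixed cocycle $f(x,y) = [\mu(x),\mu(y)] - \mu([x,y])$, we have $(\chi_1+\chi_2)f = \chi_1 f + \chi_2 f$ and $(\alpha\chi)f = \alpha(\chi f)$ for all $\alpha \in \Omega$, so $\tra$ respects both addition and scalar multiplication. Hence Theorem~\ref{thm8} upgrades to an isomorphism of $\Omega$-vector spaces $B_0(L) \cong \Hom_{T}(A,\Omega)$, where $T = \gen{K(L^*)\cap A}$.

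Next I would identify $\Hom_{T}(A,\Omega)$ with the dual of $A/T$. By definition $\Hom_{T}(A,\Omega)$ consists of those linear maps $A \to \Omega$ that vanish on $T$; each such map factors uniquely through the quotient $A/T$, and conversely every linear functional on $A/T$ pulls back along $A \to A/T$ to an element of $\Hom_{T}(A,\Omega)$. This gives a natural isomorphism $\Hom_{T}(A,\Omega) \cong \Hom(A/T,\Omega)$, the dual space of $A/T$.

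Finally, because $L$ is finite dimensional its Schur multiplier $\mathcal{M}(L)$, and hence $A \cong \mathcal{M}(L)$, is finite dimensional; therefore $A/T$ is a finite-dimensional $\Omega$-vector space and is (non-canonically) isomorphic to its dual via a dimension count, $\Hom(A/T,\Omega) \cong A/T$. Composing the three isomorphisms yields $B_0(L) \cong A/T = \frac{A}{\gen{K(L^*)\cap A}}$, as required.

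The only point requiring care is the linearity of $\tra$ in the first step: without it Theorem~\ref{thm8} delivers merely an equality of cardinalities rather than an isomorphism of the abelian structures at stake. Fortunately this linearity is immediate from the explicit transgression formula, so it is not a genuine obstacle; the remaining two steps are standard finite-dimensional duality and pose no difficulty.
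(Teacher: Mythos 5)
Your proposal is correct and follows essentially the same route as the paper: invoke Theorem~\ref{thm8}, identify $\Hom_{T}(A,\Omega)$ with $\Hom(A/T,\Omega)$, and then with $A/T$ by finite-dimensional duality. In fact you are slightly more careful than the paper, which tacitly treats the bijection of Theorem~\ref{thm8} as a linear isomorphism; your explicit check that $\tra$ is $\Omega$-linear closes that small gap.
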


\begin{proof}
Let $T = \gen{K(L^*) \cap A}$. Since $A$ is an abelian lie algebra, homomorphisms are nothing but linear transformations. It follows that  $\Hom_{T}(A, \Omega) \cong \Hom\Big{(}\frac{A}{T}, \Omega\Big{)} \cong \frac{A}{T}$. The result follows from Theorem \ref{thm8}.
\end{proof}

\begin{theorem}\label{thm9}
Let $H$ be a central ideal in $L$ and $T = \gen{K(L) \cap H}$. Then $\frac{L' \cap H}{\gen{K(L) \cap H}}$ is isomorphic to the image of the map $\tra: \Hom_{T}(H, \Omega) \mapsto B_0(L/H)$. In particular,
$\frac{L' \cap H}{\gen{K(L) \cap H}} \cong B_0(L/H)$ if the map $\tra$ is surjective.
\end{theorem}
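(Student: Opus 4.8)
The plan is to extract $\im(\tra)$ from the exact sequence of Theorem~\ref{thm7} and then identify the resulting quotient with $\frac{L' \cap H}{T}$, where $T = \gen{K(L) \cap H}$, by elementary linear duality. By exactness of that sequence, $\ker(\tra) = \im(\Res)$, so the first isomorphism theorem gives $\im(\tra) \cong \Hom_{T}(H, \Omega)/\im(\Res)$. Everything then reduces to computing this quotient of $\Omega$-vector spaces.

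Since $H$ is a central ideal it is abelian, so a homomorphism $H \to \Omega$ is simply a linear functional; thus $\Hom_{T}(H, \Omega)$ is identified with the space $(H/T)^*$ of functionals vanishing on $T$. Likewise a Lie homomorphism $L \to \Omega$ is exactly a linear functional vanishing on $L'$, so $\im(\Res)$ consists of the restrictions to $H$ of those functionals on $L$ that vanish on $L'$.

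The heart of the argument --- the one step I expect to be non-formal --- is the identification $\im(\Res) = \{g \in \Hom_{T}(H, \Omega) \mid g|_{L' \cap H} = 0\}$. The inclusion $\subseteq$ is immediate, since a functional vanishing on $L'$ restricts to one vanishing on $L' \cap H$. For $\supseteq$ I would take $g$ vanishing on $L' \cap H$ and construct an extension directly: define $f_0$ on the subspace $H + L'$ by $f_0(h + l) = g(h)$ for $h \in H$ and $l \in L'$. This is well defined precisely because $g$ kills $L' \cap H$ (if $h + l = h' + l'$ then $h - h' \in L' \cap H$), it vanishes on $L'$, and it restricts to $g$ on $H$. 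Extending $f_0$ by zero across a vector-space complement of $H + L'$ in $L$ yields $f \in \Hom(L, \Omega)$ with $\Res(f) = g$. This is the only place where finite dimensionality over a field is essential, guaranteeing both this complement and the duality invoked below.

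Finally, set $V = H/T$ and $W = (L' \cap H)/T$, a subspace of $V$ because $T \subseteq L' \cap H \subseteq H$. The previous paragraph says that, under the identification $\Hom_{T}(H, \Omega) \cong V^*$, the subspace $\im(\Res)$ is exactly the annihilator $W^{\circ}$ of $W$. Hence $\im(\tra) \cong V^*/W^{\circ} \cong W^*$ by the standard duality for subspaces, and $W^* \cong W = \frac{L' \cap H}{\gen{K(L) \cap H}}$ by finite dimensionality, which establishes the asserted isomorphism. The displayed special case is then immediate: if $\tra$ is surjective then $\im(\tra) = B_0(L/H)$, whence $\frac{L' \cap H}{\gen{K(L) \cap H}} \cong B_0(L/H)$.
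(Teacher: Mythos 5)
Your proposal is correct and follows essentially the same route as the paper: both extract $\im(\tra) \cong \Hom_{T}(H,\Omega)/\im(\Res)$ from the exact sequence of Theorem \ref{thm7}, identify $\im(\Res)$ as exactly those functionals in $\Hom_{T}(H,\Omega)$ vanishing on $L' \cap H$, and conclude by finite-dimensional linear duality that the quotient is isomorphic to $\frac{L' \cap H}{\gen{K(L) \cap H}}$. The only difference is that where the paper cites the proof of Batten's Theorem 3.2 for the fact that a functional on $H$ extends to a Lie homomorphism $L \to \Omega$ precisely when it kills $L' \cap H$, you prove this directly via the explicit extension across $H + L'$ and a vector-space complement, making the argument self-contained.
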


\begin{proof}

By Theorem \ref{thm7}, we have the following exact sequence 
\[\Hom(L, \Omega) \xrightarrow{\Res} \Hom_{T}(H, \Omega) \xrightarrow{\tra} B_0(G/H).\]
It follows that $\frac{\Hom_{T}(H, \Omega)}{\Res(\Hom(L,\Omega))}$ is isomorphic to the image of $\tra$. Thus, to prove the theorem, we only need to show that 
\[\frac{L' \cap H}{\gen{K(L) \cap H}} \cong \frac{\Hom_{T}(H, \Omega)}{\Res(\Hom(L,\Omega))}.\] 
Since $H$ is abelian it follows that 
the natural restriction map $\Res_1: \Hom_{T}(H, \Omega) \rightarrow \Hom_{T}(L' \cap Z, \Omega)$ is surjetive. Therefore
\[\frac{\Hom_{T}(H, \Omega)}{\ker \Res_1} \cong \Hom_T(L' \cap H, \Omega).\]

If we consider the natural restriction map $\Res_2: \Hom(H, \Omega) \mapsto \Hom(L' \cap H, \Omega)$, it is straight forward to note that $\ker \Res_1 = \ker \Res_2.$ Let $J$ be the subset of $\Hom(H, \Omega)$ consisting of all $\chi$ which can be extended to a homomorphism $L \rightarrow \Omega$. Invoking the proof of \cite[Theorem 3.2]{Batten} we have $J = \ker \Res_2.$ But it is obvious that $J = \Res(\Hom(L, \Omega))$. Hence, it follows that 

\[\frac{\Hom_{T}(H, \Omega)}{\Res(\Hom(L, \Omega))} \cong \Hom_T(L' \cap H, \Omega).\]
But 
\[\Hom_T(L' \cap H, \Omega) \cong \Hom\bigg(\frac{L' \cap H}{T}, \Omega\bigg) \cong \frac{L' \cap H}{T}\]
because $L' \cap H$ is abelian. This completes the proof.
\end{proof}

{\it {\bf Proof of Theorem \ref{cor}}}: Let $\bar{R} = \frac{R}{[F,R]}$ and $\bar{F} = \frac{F}{[F,R]}$.  Then $\bar{R}$ is a central ideal of $\bar{F}$ and $L \cong \frac{\bar{F}}{\bar{R}}$.  By \cite[Lemma 3.4]{Batten} the transgression map $\Tra: Hom(\bar{R}, \Omega) \mapsto H^2(L, \Omega)$ is surjective. Therefore by Lemma \ref{lem0} the map $\tra:  \Hom_{\gen{K(\bar{F}) \cap \bar{R}}}(\bar{R}, \Omega) \mapsto B_0(L)$ is also surjective. It therefore follows, from Theorem \ref{thm9}, that $B_0(L) \cong \frac{\bar{R} \cap [\bar{F}, \bar{F}]}{\gen{K(\bar{F}) \cap \bar{R}}}$. But $\bar{R} \cap [\bar{F}, \bar{F}] = \frac{F' \cap R}{[F,R]}$ and $\gen{K(\bar{F}) \cap \bar{R}} = \frac{\gen{K(F) \cap R}}{[F,R]}$. Hence $B_0(L) \cong \frac{F' \cap R}{\gen{K(F) \cap R}}$.

\vspace{.2cm}

\section{Applications}
The proof of the following proposition is exactly the same as the proof of \cite[Proposition 4.3]{Niroomand}.

\begin{theorem}\label{thm0}
Let $L$ be a Lie algebra with a free presentation $L \cong F/R$, and $M$ be an ideal of $L$, such that $T = \ker(F \mapsto L/M)$. Then the sequence
\[0 \mapsto \frac{R \cap \gen{K(F) \cap T}}{\gen{K(F) \cap R}} \mapsto B_0(L) \mapsto B_0(L/M) \mapsto \frac{M \cap L'}{\gen{K(L)\cap M}} \mapsto 0,\]
is exact.

\end{theorem}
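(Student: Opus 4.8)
The plan is to reduce everything to the Hopf-type formula of Theorem \ref{cor} and then read off the four-term sequence as the kernel--cokernel sequence of a single, explicitly described homomorphism. First I would fix the chain of ideals $R \subseteq T \subseteq F$ coming from the data: since $T = \ker(F \mapsto L/M)$, the quotient $F/T \cong L/M$ is a free presentation of $L/M$, while $M$ corresponds to $T/R$ inside $L \cong F/R$. Applying Theorem \ref{cor} to both presentations then yields the two concrete descriptions
\[
B_0(L) \cong \frac{F' \cap R}{\gen{K(F) \cap R}}, \qquad B_0(L/M) \cong \frac{F' \cap T}{\gen{K(F) \cap T}}.
\]

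Next I would introduce the middle map $B_0(L) \mapsto B_0(L/M)$ as the homomorphism induced by the inclusion $F' \cap R \hookrightarrow F' \cap T$; this is well defined because $R \subseteq T$ forces $\gen{K(F) \cap R} \subseteq \gen{K(F) \cap T}$. The entire statement then amounts to the claim that the displayed four-term sequence is precisely the kernel--cokernel sequence of this map, so all the content lies in computing its kernel and cokernel and matching them with the two outer terms. For the kernel, a class $x + \gen{K(F) \cap R}$ with $x \in F' \cap R$ vanishes exactly when $x \in \gen{K(F) \cap T}$; since $\gen{K(F) \cap T} \subseteq F'$, one has $(F' \cap R) \cap \gen{K(F) \cap T} = R \cap \gen{K(F) \cap T}$, which gives the kernel $\frac{R \cap \gen{K(F) \cap T}}{\gen{K(F) \cap R}}$, exactly the first term of the sequence.

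For the cokernel I would first note that the image of the middle map is $\big((F' \cap R) + \gen{K(F) \cap T}\big)/\gen{K(F) \cap T}$, so that the cokernel is isomorphic to
\[
\frac{F' \cap T}{(F' \cap R) + \gen{K(F) \cap T}},
\]
and then identify this with $\frac{M \cap L'}{\gen{K(L) \cap M}}$. This is where the main bookkeeping sits: using $R \subseteq T$ one obtains the lattice isomorphism $M \cap L' = \big((F' \cap T) + R\big)/R \cong (F' \cap T)/(F' \cap R)$, and one must check that the commutators of $L$ lying in $M$ are precisely the images under $F \mapsto F/R$ of $K(F) \cap T$, so that $\gen{K(L) \cap M}$ corresponds to $\big(\gen{K(F) \cap T} + (F' \cap R)\big)/(F' \cap R)$. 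Granting these two identifications, the two expressions for the cokernel coincide.

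I expect this last step---tracing $K(L) \cap M$ back to $K(F) \cap T$ modulo $R$, and keeping the intersections with $F'$ and with $R$ straight---to be the only genuinely delicate point; the crucial observation that $\gen{K(F) \cap T}$ already lies in $F'$ is what makes the intersections collapse in the intended way. Once the kernel and cokernel have been pinned down, exactness at each of the four spots is automatic from the general theory of the kernel--cokernel sequence of a homomorphism: injectivity at the left, exactness at $B_0(L)$ and at $B_0(L/M)$ by the very definitions of kernel and cokernel, and surjectivity onto the cokernel on the right. This follows the argument of \cite[Proposition 4.3]{Niroomand} essentially verbatim, with the Hopf-type formula of Theorem \ref{cor} playing the role of its group-theoretic counterpart.
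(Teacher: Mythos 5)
Your proposal is correct and coincides with the paper's intended argument: the paper simply defers to the proof of \cite[Proposition 4.3]{Niroomand}, which is exactly this kernel--cokernel computation for the map induced by the inclusion $F'\cap R \hookrightarrow F'\cap T$ under the Hopf-type identifications, with Theorem \ref{cor} here playing the role of their Proposition 4.1. Your bookkeeping (the modular-law identification $M\cap L' \cong (F'\cap T)/(F'\cap R)$, the fact that $K(L)\cap M$ is the image of $K(F)\cap T$, and the observation $\gen{K(F)\cap T}\subseteq F'$) is precisely what that proof requires.
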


\begin{definition}
A Lie algebra $L$ is called generalized Heisenberg of rank $n$ if $L' = Z(L)$ and $\dim L' = n$.
\end{definition}

A freest generalized Heisenberg Lie algebra is  a $d$-generated (minimally generated by $d$ elements) generalized Heisenberg Lie algebra of rank $\frac{1}{2}d(d-1)$ for some $d \geq 2$.  We shall denote it by $L_d$. Notice that $L_d$ has the following presentation:
\[\gen{x_1, \ldots, x_d, \  y_{ij}  \mid  \ [x_i, x_j ] = y_{ij}, 1 \leq i < j \leq d, \ \text{class} \ 2},\]
and $\dim L_d = \frac{1}{2}d(d+1).$

\begin{theorem}\label{thm}
Let $L_d$ be the freest generalized Heisenberg Lie algebra  of rank $\frac{1}{2}d(d-1)$. Then $B_0(L_d) = 0.$
\end{theorem}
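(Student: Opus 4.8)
The plan is to apply the Hopf-type formula from Theorem \ref{cor} to the explicit presentation of $L_d$. Since $B_0(L) \cong \frac{F' \cap R}{\gen{K(F) \cap R}}$ for any free presentation $L \cong F/R$, I would compute both the numerator $F' \cap R$ and the denominator $\gen{K(F) \cap R}$ for the given presentation of $L_d$ and show they coincide. Concretely, let $F$ be the free Lie algebra on $x_1, \ldots, x_d$ (the $y_{ij}$ are not free generators but are forced to equal the brackets $[x_i, x_j]$), and let $R$ be the ideal generated by the relations imposing class $2$, namely all brackets of weight $\geq 3$. Then $L_d = F/R$ realizes the stated presentation.

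First I would identify $F' \cap R$. Because $L_d$ has nilpotency class $2$, the relator ideal $R$ consists precisely of the higher-weight part of the free Lie algebra, i.e. $R = \gamma_3(F)$ (the third term of the lower central series), which already lies inside $F' = \gamma_2(F)$. Hence $F' \cap R = R = \gamma_3(F)$. The heart of the argument is then to show $\gen{K(F) \cap R} = \gamma_3(F)$ as well, where $K(F) = \{[u,v] : u, v \in F\}$ is the set of honest (single) commutators in $F$. The containment $\gen{K(F) \cap R} \subseteq R$ is immediate; the real content is the reverse inclusion, showing that every element of $\gamma_3(F)$ lies in the span of single commutators $[u,v]$ that happen to fall in $R$.

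The key step, which I expect to be the main obstacle, is producing enough single commutators landing in $R$ to span all of $\gamma_3(F)$. The idea is that a basic weight-$3$ bracket such as $[[x_i,x_j],x_k]$ is itself a single commutator $[u,v]$ with $u = [x_i,x_j]$ and $v = x_k$, and it visibly lies in $\gamma_3(F) = R$; so each such generator is already of the form $[u,v] \in K(F) \cap R$. Since $\gamma_3(F)$ is spanned by such left-normed weight-$3$ (and higher) brackets, and each higher-weight basic bracket is likewise expressible as $[u,v]$ with $u$ of lower weight and $v$ a generator so that $[u,v] \in \gamma_3(F)=R$, one gets $\gamma_3(F) \subseteq \gen{K(F) \cap R}$. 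Care is needed to handle all weights $\geq 3$ uniformly and to verify that the bilinearity relations do not obstruct writing an arbitrary element of $R$ as a combination of such commutators; this bookkeeping over the lower central series is the technical crux.

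Assembling these computations gives $F' \cap R = \gamma_3(F) = \gen{K(F) \cap R}$, so the quotient $\frac{F' \cap R}{\gen{K(F) \cap R}}$ is trivial, and Theorem \ref{cor} yields $B_0(L_d) = 0$. By Corollary \ref{cor1} one may equivalently phrase the conclusion for $B(L_d)$, but since the statement is about $B_0(L_d)$, the Hopf-type formula applied to this specific presentation suffices.
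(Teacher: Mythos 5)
Your proof is correct, but it takes a genuinely different route from the paper. The paper argues directly at the cocycle level: given $\bar{f} \in B_0(L_d)$, it defines a linear map $\sigma$ on the basis $\{x_i, [x_i,x_j]\}$ by $\sigma(x_i)=0$ and $\sigma([x_i,x_j])=-f(x_i,x_j)$, and then checks, using bilinearity of $f$ and the vanishing of $f$ on commuting pairs, that $f(x,y)=-\sigma([x,y])$ for all $x,y$; hence $f$ is a coboundary and $\bar{f}=0$. You instead invoke the Hopf-type formula of Theorem \ref{cor} applied to the presentation $L_d \cong F/\gamma_3(F)$, with $F$ free on $x_1,\ldots,x_d$, and show numerator and denominator coincide. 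Your computation is sound, and in fact the step you flag as the ``technical crux'' is immediate: since $\gamma_3(F) = [\gamma_2(F),F]$ is by definition the span of the single commutators $[u,v]$ with $u \in \gamma_2(F)$, $v \in F$, and every such $[u,v]$ lies in $K(F) \cap R$, one gets $\gamma_3(F) \subseteq \gen{K(F)\cap R}$ with no bookkeeping over Hall bases or weights at all; the reverse inclusion is trivial. What your approach buys is brevity and generality --- the identical argument shows $B_0$ vanishes for the free nilpotent Lie algebra of any class $c$, since $R=\gamma_{c+1}(F)=[\gamma_c(F),F]$ is always spanned by single commutators lying in $R$. What the paper's approach buys is self-containedness: it exhibits the coboundary witness explicitly and does not route through the Hopf-type formula, so it stands independently of Theorem \ref{cor} (though, as Theorem \ref{thm} appears after Theorem \ref{cor}, your reliance on it is legitimate).
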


\begin{proof}
Let $f: L_d \times L_d \mapsto \Omega$ be a 2-cocycle such that $\bar{f} \in B_0(L_d)$. Let $B = \{x_1, \ldots, x_d, [x_i, x_j] \mid \ 1 \leq i < j \leq d\}$. Define a map $\mu:B \mapsto \Omega$ as follows:
 \[\mu(x_i) = 0 \ \text{for} \ 1 \leq i \leq d,\]
 \[\mu([x_i,x_j]) = -f(x_i,x_j) \ \text{for} \ 1 \leq i < j \leq d.\]
Since $B$ is basis for $L_d$ we can extend this map linearly to $L_d$ and call it $\sigma$ so that $\sigma$ is a linear transformation. Let $x, y \in L_d$. Since $B$ is a basis we can write $x$ and $y$ as 
\[x = \sum_{i=1}^{d} \alpha_ix_i +\sum_{1 \leq i < j \leq d}^{}\alpha_{ij}[x_i,x_j],\]
\[y = \sum_{k=1}^{d} \beta_kx_k +\sum_{1 \leq k < l \leq d}^{}\beta_{kl}[x_k,x_l],\]
for some $\alpha_i, \beta_i, \alpha_{ij}, \beta_{ij} \in \Omega.$
Now using the bilinearity of $f$ and the fact that $f(a,b) = 0$ whenever $[a,b] = 0$, we get that
\[f(x,y) = \sum_{i=1}^{d}\sum_{k=1}^{d}\alpha_i\beta_kf(x_i,x_k).\]

Also, using the bilinearity of Lie bracket, linearity of $\sigma$ and the fact that $f(l_1, [l_2, l_3]) = 0$ for all $l_1, l_2, l_3 \in L_d$, we get that  
\[\sigma([x,y]) = -\sum_{i=1}^{d}\sum_{k=1}^{d}\alpha_i\beta_kf(x_i,x_k).\]
Therefore $f(x,y) = -\sigma([x,y]).$ Thus $f$ is a coboundry and $\bar{f} = 0$. This completes the proof.

\end{proof}

Rostami et al. \cite{Niroomand} proved that the Bogomolov multiplier of a Heisenberg Lie algebra is trivial. We prove this fact as a Corollary of Theorem \ref{thm}.\\

A Heisenberg Lie algebra of dimension $2n+1, n >0,$ is given by the following presentation:
\[\gen{x_1, \ldots, x_{2n}, \  v  \mid  \ [x_{2i-1}, x_{2i} ] = v, [x_j,x_k] = 0,  1 \leq i \leq n, (j,k) \neq (2i-1, 2i)}.\]

\begin{corollary}
Let $H_{2n+1}$ be the Heisenberg Lie algebra of dimension $2n+1.$ Then $B_0(H_{2n+1}) = 0.$
\end{corollary}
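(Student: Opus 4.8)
The plan is to realize $H_{2n+1}$ as a central quotient of the freest generalized Heisenberg Lie algebra $L_{2n}$ and then feed this into the exact sequence of Theorem \ref{thm0}, using the vanishing $B_0(L_{2n}) = 0$ supplied by Theorem \ref{thm}. First I would fix the surjection $\pi \colon L_{2n} \to H_{2n+1}$ determined by $x_i \mapsto x_i$ for $1 \le i \le 2n$; this is well defined because $L_{2n}$ is free among class $2$ Lie algebras on $2n$ generators, while $H_{2n+1}$ is such an algebra (its generator $v = [x_1,x_2]$ being redundant). Writing $L = L_{2n}$ and $M = \ker \pi$, the kernel $M$ is a central ideal contained in $L' = Z(L) = \langle y_{ij} : 1 \le i < j \le 2n\rangle$, and $H_{2n+1} \cong L/M$. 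Concretely $M$ is spanned by the $y_{jk}$ for which $(j,k)$ is \emph{not} of the form $(2i-1,2i)$, together with the differences $y_{2i-1,2i} - y_{2j-1,2j}$.

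Next I apply Theorem \ref{thm0} to the pair $(L,M)$. Since $B_0(L) = B_0(L_{2n}) = 0$ by Theorem \ref{thm}, the four-term exact sequence collapses to an isomorphism
\[B_0(H_{2n+1}) \;\cong\; \frac{M \cap L'}{\langle K(L) \cap M\rangle}.\]
Because $M \subseteq L' = Z(L)$ we have $M \cap L' = M$, so the corollary reduces to the purely algebraic claim that $\langle K(L) \cap M\rangle = M$, i.e. that $M$ is spanned by \emph{single} commutators of $L$ that happen to lie in $M$. The subtlety here is that $K(L)$ is the set of genuine commutators rather than all of $L'$, so the spanning must be achieved using actual brackets lying in the kernel.

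Finally I verify the claim on the spanning set of $M$. Each $y_{jk} = [x_j,x_k]$ with $(j,k)$ not a consecutive odd--even pair is already a commutator lying in $M$, hence in $K(L) \cap M$. For the differences I would use the identity
\[[x_{2i-1} - x_{2j-1},\, x_{2i} + x_{2j}] = (y_{2i-1,2i} - y_{2j-1,2j}) + [x_{2i-1}, x_{2j}] - [x_{2j-1}, x_{2i}],\]
whose left-hand side is a single commutator sent by $\pi$ to $v - v = 0$ and hence lies in $K(L) \cap M$, while the two correction terms are commutators attached to non-consecutive index pairs (since $i \ne j$) and therefore also lie in $K(L) \cap M$. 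Rearranging exhibits $y_{2i-1,2i} - y_{2j-1,2j}$ as an element of $\langle K(L) \cap M\rangle$. Since every generator of $M$ now lies in $\langle K(L) \cap M\rangle$, we conclude $\langle K(L) \cap M\rangle = M$, whence $B_0(H_{2n+1}) = 0$.

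I expect this last rearrangement — producing a commutator in $M$ whose ``off-diagonal'' error terms are themselves harmless elements of $M$ — to be the only real obstacle; everything else is the formal machinery of Theorems \ref{thm} and \ref{thm0}. As an alternative I note that one could instead mimic the proof of Theorem \ref{thm} directly on the basis $\{x_1,\dots,x_{2n},v\}$, but that route requires the extra consistency check that $f(x_{2i-1},x_{2i})$ is independent of $i$ for a cocycle $f$ with $\bar f \in B_0(H_{2n+1})$, which is exactly the content of the identity above; the quotient approach packages this more cleanly.
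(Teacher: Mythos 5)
Your proposal is correct and follows essentially the same route as the paper: realize $H_{2n+1}$ as $L_{2n}/M$ for the same central ideal $M$, invoke Theorem \ref{thm} and the exact sequence of Theorem \ref{thm0} to reduce to showing $M \cap L_{2n}' = \gen{K(L_{2n}) \cap M}$, and verify this with the same commutator identity $[x_{2i-1}-x_{2j-1},\, x_{2i}+x_{2j}] = (y_{2i-1,2i}-y_{2j-1,2j}) + [x_{2i-1},x_{2j}] - [x_{2j-1},x_{2i}]$. Your write-up is in fact slightly more explicit than the paper's, since you spell out why the left-hand side and the correction terms each lie in $K(L_{2n}) \cap M$.
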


\begin{proof}
 Let $L_{2n}$ be the generalized Heisenberg Lie algebra of rank $n(2n-1)$ and $M$ be its ideal generated by $[x_{2r-1}, x_{2r}]-[x_{2s-1}, x_{2s}], 1 \leq r < s \leq n$ and $[x_t, x_u], 1 \leq t < u \leq 2n, (t, u) \neq (2i-1,2i)$ for any $i \leq n.$ Then it is easy to see that $L_{2n}/M$ is isomorphic to $H_{2n+1}.$ Since $B_0(L_{2n}) = 0$ by Theorem \ref{thm}, it follows from Theorem \ref{thm0} that 
 \[B_0(L_{2n}/M) \cong \frac{M \cap L_{2n}'}{\gen{K(L_{2n})\cap M}}.\]
 Since for $1 \leq r < s \leq n$,
 \[[x_{2r-1}-x_{2s-1}, x_{2r}+x_{2s}] = [x_{2r-1}, x_{2r}]- [x_{2s-1}, x_{2s}]+[x_{2r}, x_{2s-1}] +[x_{2r-1}, x_{2s}],\]
 it follows that $M \cap L_{2n}' = \gen{K(L_{2n})\cap M}.$
 Thus $B_0(L_{2n}/M) = 0$ so that $B_0(H_{2n+1}) = 0.$
 
\end{proof}

We now proceed to prove Theorem \ref{thm1}.\\

{\bf Proof of Theorem \ref{thm1}:}
Let $d$ be a natural number greater than $4n$ and $L_d$ be the $d$-generated freest generalized Heisenberg Lie algebra generated by $x_1, x_2, \ldots x_d$. Let $M$ be the ideal generated by $[x_1, x_2]+[x_3, x_4], [x_5, x_6]+[x_7, x_8], \ldots [x_{4n-3}, x_{4n-2}]+[x_{4n-1}, x_{4n}].$ Since $B_0(L_d) = 0$, it follows from Theorem \ref{thm0} that 
\[\dim B_0(L_d/M) = \dim \frac{L_d' \cap M}{\gen{K(L_d) \cap M}}.\]
 Next we prove that $K(L_d) \cap M = \{0\}$. For this let $l \in K(L_d) \cap M.$ Since $l \in M$ there exists $\gamma_k's$ such that
\[l=\sum_{k=0}^{n-1}\gamma_{k+1}\Big([x_{4k+1}, x_{4k+2}]+[x_{4k+3}, x_{4k+4}]\Big).\]
 Also there exist $\alpha_i$'s and $\beta_j$'s such that
\[l=\bigg{[}\sum_{i=1}^{d}\alpha_ix_i,\sum_{j=1}^{d}\beta_jx_j\bigg{]}\]
because $l \in K(L_d).$ Hence
\[\sum_{k=0}^{n-1}\gamma_{k+1}\Big([x_{4k+1}, x_{4k+2}]+[x_{4k+3}, x_{4k+4}]\Big)=\sum_{i=1}^{d-1}\sum_{j=i+1}^{d}(\alpha_i\beta_j-\alpha_j\beta_i)[x_i, x_j].\]
It follows that 
\begin{equation}\label{eq0}
\alpha_i\beta_j-\alpha_j\beta_i = 0  \ \ \text{when} \ \ (i,j) \neq (2k+1, 2k+2) \ \text{for any} \ k= 0,1,\ldots n-1,
\end{equation}
and 
\[\gamma_{k+1} = \alpha_{4k+1}\beta_{4k+2}-\alpha_{4k+2}\beta_{4k+1} = \alpha_{4k+3}\beta_{4k+4}-\alpha_{4k+4}\beta_{4k+3}\]
 \text{for any} \  $k = 0,1,\ldots n-1$. From Equation \ref{eq0} we have 
\begin{equation}\label{eq1}
\alpha_{4k+1}\beta_{4k+3}-\alpha_{4k+3}\beta_{4k+1}=0 
\end{equation}
\begin{equation}\label{eq2}
\alpha_{4k+1}\beta_{4k+4}-\alpha_{4k+4}\beta_{4k+1}=0
\end{equation}
\begin{equation}\label{eq3}
\alpha_{4k+2}\beta_{4k+3}-\alpha_{4k+3}\beta_{4k+2}=0
\end{equation}
\begin{equation}\label{eq4}\alpha_{4k+2}\beta_{4k+4}-\alpha_{4k+4}\beta_{4k+2}=0,\end{equation}
for $k=0,1,\ldots n-1.$

Suppose $\alpha_{4k+1} = \beta_{4k+1} = 0.$ Then $\gamma_{k+1} =0.$ Assume then that $\alpha_{4k+1} = 0$ but  $\beta_{4k+1} \neq 0.$ From Equations \ref{eq1} and \ref{eq2}, $\alpha_{4k+3} = \alpha_{4k+1} = 0$. As a result, $\gamma_{k+1} = 0$ in this case as well. Thus we have shown that if $\alpha_{4k+1} = 0$, then $\gamma_{k+1} = 0$. Similarly, if either of $\alpha_{4k+2}, \alpha_{4k+3}, \alpha_{4k+4}, \beta_{4k+1}, \beta_{4k+2}, \beta_{4k+3}, \beta_{4k+4}$ is zero, then $\gamma_{k+1} = 0.$ Hence, we can now assume that neither of $\alpha_{4k+i}, \beta_{4k+i}$ is zero for $i = 1, 2, 3, 4.$ From Equations \ref{eq1} and \ref{eq2} we can deduce that $\beta_{4k+3}/\beta_{4k+4} = \alpha_{4k+3}/\alpha_{4k+4}.$ Hence $\gamma_{k+1}=0$ so that $l=0$. It follows that  $K(L_d) \cap M = \{0\}$.

Also, $M \leq L_d'$. Hence $\dim B_0(L_d/M) = \dim (M) = n.$ By Corollary \ref{cor1}  $\dim B(L_d/M) = n.$ Taking $L$ to be $L_d/M$ completes the proof.\\

{\bf Proof of Theorem \ref{thm2}:} Let $(\theta, \phi)$  be an isoclinism between $L$ and $M$, i.e., $\theta: \frac{L}{Z(L)} \mapsto \frac{M}{Z(M)}$ and $\phi:\gamma_2(L) \mapsto \gamma_2(M)$ be isomorphisms and whenever $\theta(l_iZ(L)) = m_iZ(M)$ for $i =1,2$, we have $\phi([l_1, l_2]) = [m_1, m_2].$ Let $\bar{f} \in B_0(L)$ where $f:L \times L \mapsto \Omega$ be a cocycle.
Define $c_f: M \times M \mapsto \Omega$ by $c_f(m_1, m_2) = f(l_1, l_2)$, where $l_1$ and $l_2$ are given by $\theta^{-1}(m_i+Z(M))= l_i+Z(L)$ for $i= 1,2.$ The rest of the proof follows from the following lemmas:

\begin{lemma}\label{lem1}
Let $c_f$ be the map defined above. Then 
\begin{itemize}
    \item[(i)] $c_f$ is well defined.
    \item[(ii)] $c_f$ is a 2 cocycle.
    \item[(iii)] $\overline{c_f} \in B_0(M).$
\end{itemize}
\end{lemma}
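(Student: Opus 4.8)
The plan is to verify the three assertions in turn, in each case transferring the required property of $c_f$ back to a property of $f$ on $L$ via the representatives supplied by the isoclinism. Throughout, for each $m_i$ write $l_i \in L$ for a representative of the coset $\theta^{-1}(m_i + Z(M)) = l_i + Z(L)$; this coset is uniquely determined by $m_i$ since $\theta$ is an isomorphism, but $l_i$ itself is only pinned down modulo $Z(L)$, which is exactly why well-definedness requires an argument.

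For (i) I would take two representatives $l_i$ and $l_i' = l_i + z_i$ with $z_i \in Z(L)$, and expand $f(l_1', l_2')$ by bilinearity as $f(l_1, l_2)$ plus the three cross terms $f(l_1, z_2)$, $f(z_1, l_2)$ and $f(z_1, z_2)$. Because each $z_i$ is central, the pairs $(l_1, z_2)$, $(z_1, l_2)$ and $(z_1, z_2)$ all commute, so the hypothesis $\bar{f} \in B_0(L)$ forces every cross term to vanish; hence $f(l_1', l_2') = f(l_1, l_2)$ and $c_f$ is well defined. This is the decisive use of membership in $B_0(L)$ rather than merely in $H^2(L,\Omega)$, and I expect it to carry the real content of the lemma.

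For (ii), bilinearity and the alternating property transfer directly: since $\theta^{-1}$ is linear, a representative of $m_1 + m_1'$ may be taken to be $l_1 + l_1'$ and one of $\alpha m_1$ to be $\alpha l_1$, and part (i) lets me use precisely these representatives, so bilinearity of $f$ yields bilinearity of $c_f$, while a common representative for a repeated argument gives $c_f(m, m) = f(l, l) = 0$. The linchpin for the cocycle identity is the observation that $[l_1, l_2]$ is itself a valid representative of the coset of $[m_1, m_2]$: as $\theta$ is a Lie-algebra isomorphism it respects brackets, so $\theta([l_1, l_2] + Z(L)) = [m_1 + Z(M),\, m_2 + Z(M)] = [m_1, m_2] + Z(M)$. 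Consequently $c_f([m_1, m_2], m_3) = f([l_1, l_2], l_3)$, and likewise for the two cyclic permutations, so the cocycle identity for $c_f$ evaluated at $m_1, m_2, m_3$ is exactly the cocycle identity for $f$ evaluated at $l_1, l_2, l_3$, which holds.

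For (iii), suppose $[m_1, m_2] = 0$ in $M$. The compatibility of the isoclinism gives $\phi([l_1, l_2]) = [m_1, m_2] = 0$, and since $\phi$ is injective this forces $[l_1, l_2] = 0$ in $L$; the hypothesis $\bar{f} \in B_0(L)$ then yields $c_f(m_1, m_2) = f(l_1, l_2) = 0$, which is precisely the defining condition for $\overline{c_f} \in B_0(M)$. The only genuinely subtle points are the well-definedness in (i) and the bracket-lifting identity in (ii); once these are in place, all three statements reduce to the corresponding facts for $f$.
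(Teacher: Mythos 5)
Your proposal is correct and follows essentially the same route as the paper's own proof: well-definedness via the vanishing of $f$ on commuting pairs involving central elements, the cocycle identity via the fact that $[l_1,l_2]+Z(L)$ represents $\theta^{-1}([m_1,m_2]+Z(M))$, and part (iii) via injectivity of $\phi$. Your write-up is in fact slightly more explicit than the paper's (spelling out the cross terms in (i) and the use of injectivity of $\phi$ in (iii)), but the ideas are identical.
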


\begin{proof}
Since $f$ is bilinear and $f(k,l) = 0$ whenever $[k,l] = 0$. It follows that $f(l_1+z_1, l_2+z_2) = f(l_1,l_2)$ for every $z_1, z_2 \in Z(L)$. This shows that $c_f$ is well-defined.

To see that $c_f$ is a 2-cocycle, let $m_1, m_2, m_3 \in M$ and let $\theta^{-1}(m_i+Z(M))= l_i+Z(L)$ for $i= 1,2,3.$ It is obvious that $\theta^{-1}(m_1+m_2+Z(M))= l_1+l_2+Z(L).$ Therefore $c_f(m_1+m_2, m_3) = f(l_1+l_2, l_3)$ which equals $f(l_1, l_3)+ f(l_2, l_3)$ that is equal to  $c_f(m_1, m_3)+c_f(m_2, m_3).$ Similarly $c_f(m_1, m_2+m_3) = c_f(m_1, m_2)+c_f(m_1, m_3)$. Thus $c_f$ is bilinear. Also, it is easy to see that $c_f$ is alternating because $f$ is alternating. Next, For $i,j,k \in \{1,2,3\}$ note that $c_f([m_i,m_j], m_k) = f([l_i, l_j],l_k)$ because 
\begin{equation*}
\begin{split}\theta^{-1}\Big([m_i,m_j]+Z(M)\Big) &= \theta^{-1}\Big(\Big[m_i+Z(M), m_j+Z(M)\Big]\Big)\\
&= \Big[\theta^{-1}\Big(m_i+Z(M)\Big), \theta^{-1}\Big(m_j+Z(M)\Big)\Big]\\
&= \Big[l_i+Z(L),l_j+Z(L)\Big]\\
&= [l_i,l_j]+Z(L).
\end{split}
\end{equation*}
\textit{}t follows that $c_f$ is a 2-cocycle, since $f$ is a 2-cocycle.

To see that $\overline{c_f} \in B_0(M)$, suppose that $[m_1, m_2]= 0$. But then $[l_1,l_2] =0$ because $\phi([l_1, l_2]) = [m_1,m_2].$ Since $\overline{f} \in B_0(L)$ it follows that  $f(l_1,l_2) = 0.$ Hence $c_f(m_1,m_2) =0.$ 

\begin{lemma}
The map $\eta: B_0(L) \mapsto B_0(M)$ defined by $\eta(\overline{f}) = \overline{c_f}$ is an isomorphism.
\end{lemma}

\begin{proof}
 We begin by ensuring that the map is well-defined. To verify this consider $\sigma: L \times L \mapsto \Omega$ to be a coboundary. Then 
 \[c_{f+\sigma}(m_1, m_2) = (f+\sigma)(l_1,l_2) = f(l_1, l_2)+\sigma(l_1,l_2) = c_f(m_1, m_2)+c_{\sigma}(m_1,m_2).\]
 Thus we have,  $c_{f+\sigma}= c_f+c_{\sigma}.$ Notice that $c_{\sigma}$ is a coboundary because $\sigma$ is coboundary. Therefore $\overline{c_f} = \overline{c_{f+\sigma}}$, i.e., $\eta(\overline{f})= \eta(\overline{f+\sigma}).$ This proves that $\eta$ is well-defined. 
 
 In a similar fashion one can see that $c_{f_1+f_2} = c_{f_1}+c_{f_2}$ and $c_{\alpha f_1} = \alpha c_{f_1}$ for each $\alpha \in \Omega$ and each cocycles $f_1$, $f_2$ from $L \times L$ to $\Omega$. So that $\eta(\overline{f_1+f_2})= \eta(\overline{f_1})+\eta(\overline{f_2})$ and $\eta(\alpha\overline{f_1}) = \alpha\eta(\overline{f_1}).$ Thus $\eta$ is a linear map.
 
 Finally, in order to see that $\eta$ is a bijection, we define another map $\chi: B_0(M) \mapsto B_0(L)$ in the same way as $\eta$ is defined from $B_0(L)$ to $B_0(M).$ Then it is easy to see that $\eta\chi$ and $\chi\eta$ both are identity maps and thus $\eta$ is a bijection. This completes the proof.
\end{proof}

\end{proof}


\vspace{.3cm}


\end{document}